\pgfplotsset{
	table/search path={plot_figures},
}
\newtheorem{assumption}{Assumption}
\newtheorem{theorem}{Theorem}
\newtheorem{lemma}[theorem]{Lemma}
\newtheorem{proposition}[theorem]{Proposition}
\newtheorem{corollary}[theorem]{Corollary}
\newtheorem{remark}{Remark}
\pgfplotsset{
	table/search path={plot_figures},
}
\title{\LARGE \bf
	Non-Bayesian Social Learning with Uncertain Models over Time-Varying Directed Graphs
}
\author{C\'esar A. Uribe, James Z. Hare, Lance Kaplan, and Ali Jadbabaie 
	\thanks{This research was sponsored by the DARPA Lagrange, Vannevar Bush Fellowship, and OSD LUCI programs. The views and conclusions contained in this document are those of the authors and should not be interpreted as representing the official policies, either expressed or implied, of the Army Research Laboratory or the U.S. Government. The U.S. Government is authorized to reproduce and distribute reprints for Government purposes notwithstanding any copyright notation herein.}
	\thanks{C.A.U and A.J. are with the Laboratory for Information and Decision Systems (LIDS), and the Institute for Data, Systems, and Society (IDSS),
		Massachusetts Institute of Technology, Cambridge, MA
		(\textit{\{cauribe,jadbabai\}@mit.edu}). J.Z.H. (\textit{james.z.hare31@gmail.com}) and L.K. (\textit{lance.m.kaplan@us.army.mil}) are with the Army Research Laboratory, Adelphi, MD . }%
}
\begin{document}

	\maketitle
	\thispagestyle{empty}
	\pagestyle{empty}
	
	\begin{abstract}
		We study the problem of non{-}Bayesian social learning with uncertain models, in which a network of agents seek to cooperatively identify the state of the world based on a sequence of observed signals. In contrast with the existing literature, we focus our attention on the scenario where the statistical models held by the agents about possible states of the world are built from finite observations. We show that existing non{-}Bayesian social learning approaches may select a wrong hypothesis with non{-}zero probability under these conditions. Therefore, we propose a new algorithm to iteratively construct a set of beliefs that indicate whether a certain hypothesis is supported by the empirical evidence. This new algorithm can be implemented over time{-}varying directed graphs, with non{-}doubly stochastic weights. 
	\end{abstract}
	
	\section{Introduction}
	
	Non-Bayesian social learning has emerged as a topic of interest as it captures a variety of computational and cognitive constraints and biases that agents may have in a making cooperative decisions~\cite{jad12,molavi2018theory,eps10}. In contrast to Bayesian approaches~\cite{ace11,gal03}, non{-}Bayesian learning assumes agents have bounded rationality and the information aggregation mechanism differs from the Bayesian setting. The main objective is then to design belief update rules that iterative aggregate information (e.g. weighted averages) from neighbors and private observations, and asymptotically behaves as if all information was available at centrally~\cite{ned15,sha14,ned17e}.
	
	Formally, given a state of the world $\theta^*$, a group of agents following a non{-}Bayesian social learning approach, receive private observations and communicate with other agents in the network to agree on a state, from a set $\Theta$, that best explains the observed signals. This task is achieved by using statistical models for each member of the hypothesis set and sequentially testing whether the observed signals are distributed according to one of the models, which in turn corresponds to one possible state of the world. In~\cite{ned15,ned17e}, this process is described as a group of $m$ agents trying to solve collectively the following optimization problem:
	\begin{align}\label{prob:main}
	\min_{\theta \in \Theta} \sum_{i=1}^{m}D_{KL}\left(P_{\theta^*}^i\|P^i_\theta\right),
	\end{align}
	where each agent sequentially receives observations from a random variable distributed according to an \textit{unknown} $P_{\theta^*}^i$, $P^i_\theta$ is the statistical model an agent $i$ holds about hypothesis $\theta$, and $D_{KL}(P\|Q)$ is the Kullback{-}Leibler divergence between distributions $P$ and $Q$.
	
	The solution of problem~\eqref{prob:main} has been extensively studied for different graph connectivity assumptions, topologies, observation models, robustness, for which consistency, and both asymptotic and non{-}asymptotic convergence rates has been established~\cite{ned16c,uribe2018increasing,sha13,lal14b,rah15,su16c,ned15b,ned16}. However, one required property for existing results is that every agent knows the statistical models corresponding to each of the hypothesis in the hypotheses set precisely. This is referred to as \textit{dogmatic} knowledge of the hypotheses. That is, for each $\theta \in \Theta$, every agent can evaluate the likelihood of an observed signal given that $\theta$ is the state of the world. As a result, existing algorithms are shown to concentrate beliefs asymptotically around the hypotheses that solves~\eqref{prob:main}.
	
	In this paper, we focus on the case where the statistical models of each of the hypotheses are built from finite empirical evidence. Thus, there is uncertainty about the models. For example, the probability of certain events under a hypothesis must be estimated from finite amounts of data. Agents need to consider statistical uncertainty about the likelihood functions. Taking such uncertainty into account has been previously studied in possibility theory \cite{DP2012}, probability intervals \cite{W1997}, fuzzy set theory \cite{Z1965}, and belief functions \cite{S1976, SK1994} where the likelihood function parameters are expressed within a fixed interval. In this work we model uncertainty in the likelihood function parameters as a second-order probability density function~\cite{W1996, J2018}. Such that the parameters are uniformly distributed when the prior evidence is 0 and becomes concentrated around the ground truth distribution as more prior evidence is collected. 
		
	Figure~\ref{fig:toy} presents a toy example, on a centralized scenario that will help us motivate and explain the distributed learning problem with uncertain models. Consider you are given a finite set of labeled and weighted dice, where each die corresponds to a possible state of the world. You are allowed to roll each die a finite number of times, not necessarily the same number for each die. One can build a statistical model for each die from the histograms generated by the observed outcomes. Then, you are given a new unlabeled die, corresponding to the current state of the world, and you are asked to assign this new die to one of the classes of the observed dice in the previous stage. Clearly, if one has an access to the outcomes of an infinite number of rolls of the original dice set, a perfect statistical model can be built, and the task reduces to classical hypothesis testing. Given that one only has access to a possibly non{-}uniform finite number of realizations, the built statistical models are themselves uncertain, and such uncertainty must be taken into account in order to select the state of the world. 
	
	\vspace{0.2cm}
	\begin{figure}[t]
		\centering
		\resizebox{!}{0.2\textwidth}{
		\begin{overpic}[width=0.4\textwidth]{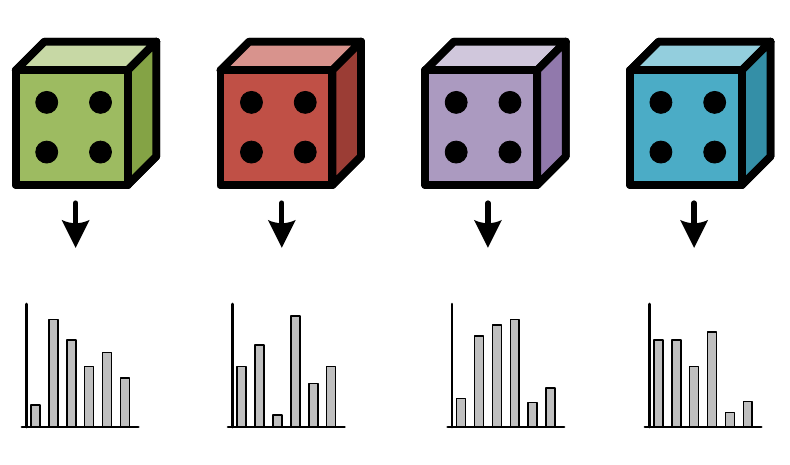}
			\put(46,61){{\small $\text{\underline{\textbf{STAGE 1}}}$}}
			\put(2,23){{\small $R_{\theta_1} \text{rolls}$}}
			\put(28,23){{\small $R_{\theta_2} \text{rolls}$}}
			\put(55,23){{\small $R_{\theta_3} \text{rolls}$}}
			\put(80,23){{\small $R_{\theta_4} \text{rolls}$}}
			\put(9,56){{\small $\theta_1$}}
			\put(34,56){{\small $\theta_2$}}
			\put(61,56){{\small $\theta_3$}}
			\put(86,56){{\small $\theta_4$}}
			\put(-4,-1){\rotatebox{90}{ $\text{Histograms}$}}
		\end{overpic}} 
		\resizebox{!}{0.2\textwidth}{
		\begin{overpic}[width=0.4\textwidth]{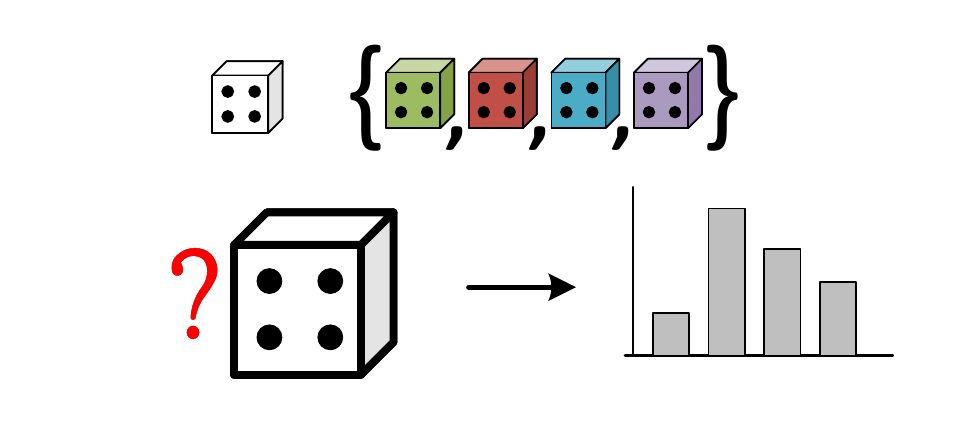}
			\put(46,42){{\small $\text{\underline{\textbf{STAGE 2}}}$}}
			\put(30,33){{{\Large $\boldsymbol{\in}$}}}
			\put(43,23){{\small $\text{Sequential }$}}
			\put(48,18){{\small $\text{Rolls}$}}
			\put(67,3){{\small ${\theta_1} $}}
			\put(73,3){{\small ${\theta_2} $}}
			\put(79,3){{\small ${\theta_3} $}}
			\put(85,3){{\small ${\theta_4} $}}
			\put(85,20){{\small $\mu_k(\theta)$}}
		\end{overpic} 
		}
		\caption[A toy example of non{-}Bayesian learning with uncertain models]{\textbf{A toy example of non{-}Bayesian learning with uncertain models.} One is given four dice (green, red, purple, and blue), each representing a class or possible state of the world from a set $\Theta = \{\theta_1, \theta_2,\theta_3,\theta_4\}$. Each of the dice can be rolled a finite number of times ($R_{\theta_1},R_{\theta_2},R_{\theta_3},R_{\theta_4}$) and the outcomes are recorded in histograms. Then, a new die is given, corresponding to the state of the world $\theta^*$, and one is tasked to identify it by assigning this new die to one state from $\Theta$ from sequential rolls of the new die.}
		\label{fig:toy}
	\end{figure}
	
	The contributions of this paper are as follows: We propose a new belief update rule that allows a network of agents to collectively agree on the set of hypotheses with prior evidence that is consistent with the state of the world. Moreover, this new update rule can be implemented over networks with time{-}varying and directed links, without requiring doubly stochastic weights, or balanced graphs. To do so, we proposed an uncertain likelihood ratio function that takes into account the uncertainty generated by the finite evidence when building the statistical models for each of the hypothesis. We show that the proposed update rule converges to the distributed likelihood ratio test of the true distribution of the state of the world given the observations in the learning stage.
	
	\noindent\textbf{Notation:} Subscripts denote time indices and make use of the letter $k$. Agent indices are represented as superscripts and use the letters $i$ or $j$. The $i${-}th row and $j${-}th column entry of a matrix $A$ is denoted as $[A]_{ij}$. Moreover, for a sequence of matrices $\{A_{k}\}$ we denote $A_{k:t} = A_kA_{k{-}1}\cdots A_{t{+}1}A_t$ for $k \geq t$. We use node and agent interchangeably. 
	
	\section{Problem Formulation, Algorithm, and Results} \label{sec:pf}
	
	In this section, we describe the problem formulation of non{-}Bayesian social learning with uncertain models. Then, we present the proposed algorithm, and finally we state our main result about the convergence of the proposed method.
	
	Consider a group of $m$ agents connected over a network, with possibly time{-}varying and directed links, that seek to cooperatively identify a state of the world $\theta^* \in \Theta$ from a finite set of states (i.e., hypotheses) $\Theta = (\theta_1,\theta_2,\dots,\theta_M)$. Each agent $i$ sequentially receives independent and identically distributed symbols $\{S_k^i\}$ from a finite set $\mathcal{S} = (s_1,s_2,\dots,s_N)$, where at any time instant $k \geq 0$, the probability of agent $i$ observing symbol $s_l \in\mathcal{S}$ is $[p^i_{\theta^*}]_l$, thus $\sum_{l=1}^N [p^i_{\theta^*}]_l =1$ for all~$i$. Note that the vectors $p^i_{\theta^*}$ are unknown to agents. Moreover, the assumption of identical distribution is made for the observations of each agent, but in general different agents could observe symbols with different distributions. An agent records this information as a variable $[n^i_k]_l$ which counts the number of times the symbol $s_l$ has been observed up to time $k$.
	
	In order to identify the state of the world, each agent requires a statistical model for each hypothesis $\theta \in \Theta$. Traditionally, it is assumed each agent holds a family of distributions $\mathsf{P}^i = (p^i_{\theta} ; \theta \in \Theta )$, where a hypothesis $\theta$ being the state of the world implies that $p^i_{\theta} = p^i_{\theta^*}$. Thus, the main underlying assumption is that an agent has perfect (or \textit{dogmatic}) knowledge of the statistical model of every hypothesis. However, in this paper we \textit{relax} this assumption and incorporate uncertainty about the family of statistical models $\mathsf{P}^i$. We opt to explicitly consider that the statistical models representing the hypotheses are obtained via a finite set of prior experiments. 
	
	Every agent $i$, before starting the social learning process, has access to a realization of a set of multinomial random variables $(Z^i_\theta ; \theta \in \Theta )$, where $Z^i_\theta \sim \text{Mutinomial}(p^i_\theta,R_\theta^i)$ where $p^i_\theta$ is the vector of probabilities characterizing hypothesis $\theta$ and $R_\theta^i\geq 0$ is the number of independent trials, each of which leads to a success for one of $M$ categories. Effectively, a realization $z^i_\theta$ of the random variable $Z^i_\theta$ contains, at each coordinate, the number of times a particular symbol appeared out of $R_\theta^i$ trials. Therefore, each agent uses the tuples $\{(Z^i_\theta),R_\theta\}$ to build its uncertain statistical model. This process will be described later in Section~\ref{sec:models}.
	
	Agents interact over a possibly time{-}varying directed network, represented as a sequence of graphs $\{\mathcal{G}_k = (V,E_k)\}$, where $V = (1,\dots,m)$ is the set of nodes, and $E_k$ is the set of directed edges available at time $k$ with $(j,i)\in E_k$ implying that node $j$ can send information (to be defined later) to agent $i$ at time $k$. We denote as $d^i_k$ the out{-}degree of a node $i$ at time $k$, that is the number of nodes that node $i$ can send information to at time instant $k$. Moreover, we denote $N^i_k \subseteq V$ as the subset of nodes that node $i$ receives information from at time $k$.
	
	We propose an algorithm for the distributed cooperative learning of the state of the world $\theta^*$. As we will see later in Section~\ref{sec:models}, given the uncertain statistical models about the hypothesis set, we relax the learning condition and switch our attention to an algorithm that provides information about how much the received information is explained by each of the hypotheses.
	
	Each agent $i$ updates its belief $\mu^i_k(\theta)$ on each hypotheses $\theta \in \Theta$ at time $k$, using a new symbol $s^i_{k{+}1}$ , and the beliefs received from its incoming neighbors, i.e., $(\mu^j_k \ \text{s.t.} \ (j,i) \in E_k)$.  Therefore, each agent $i$ at any time instant $k$ has access to the tuple $\{s^i_k,n^i_k,z^i_\theta, R^i_\theta , \mu^j_k ; \theta \in\Theta, (j,i) \in E_k \}$. 
	
	We propose the following update rule:
	\begin{subequations}\label{algo:main}
		\begin{align}
		y^i_{k{+}1} & = \sum_{j \in N^i_k} \frac{y_k^j}{d_k^j{+}1}, \\
		\mu^i_{k{+}1}(\theta) & = \left( \prod_{j\in N^i_k} \mu^j_k(\theta)^{\frac{y^j_k}{d^j_k {+}1}}\ell^i_{\theta}(s^i_{k{+}1},n^i_{k{+}1})\right) ^{\frac{1}{y^i_{k{+}1}}},
		\end{align}
	\end{subequations}
	where $y^i_0 =1$ for all $i$, and 
	\begin{align}\label{eq:ell}
	\ell^i_{\theta}(s^i_{k},n^i_{k}) & =  \frac{\left( [Z_\theta]_{s_{k}^i} {+} [n_{k}^i]_{{s_{k}^i} }\right)\left(M{+}k {-}1\right)  }{\left( R_\theta^i {+} k {+}M {-}1\right) [n_{k}^i]_{s_{k}^i}    }.
	\end{align}

	\begin{remark}
		Algorithm~\ref{algo:main} is inspired by~\cite{ned15b}. However, note that beliefs $\mu^i_k(\theta)$ as beliefs, are consistent with the literature. However, they are not a probability distribution over the hypothesis space, and as such do not add up to one. Later in Section~\ref{sec:models} we will provide an explanation for this particular surrogate likelihood function~\eqref{eq:ell}.
	\end{remark}

	
	
	\begin{assumption}\label{assum:connected}
		The sequence of graphs $\left\{\mathcal{G}_k\right\}$ is $B${-}strongly{-}connected, i.e., there is an integer $B\ge 1$ such that the graph $\left(V,\bigcup_{i=kB}^{\left(k{+}1\right)B{-}1}E_i\right)$ is connected for all $k \geq 0$. 
	\end{assumption}

	Next, we state our main result about the convergence properties of the update rule~\ref{algo:main}. 
	
	\begin{theorem}\label{thm:main}
		Let Assumption~\ref{assum:connected} hold. Then, the update rule in~\ref{algo:main} has the following property:
		\begin{align*}
		\lim_{k\to \infty} \log \mu^i_k(\theta) &= \frac{1}{m} \sum_{j=1}^{m}\log \frac{\text{Dirichlet}(p_{\theta^*}^j;Z_\theta^j+\boldsymbol{1})}{\text{Dirichlet}(p_{\theta^*}^j;\boldsymbol{1})} \quad \text{a.s.} 
		\end{align*}
		where $
		\text{Dirichlet}(x ; \boldsymbol{\alpha})  =  \frac{1}{\text{Beta}(\alpha)}\prod \pi_i^{\alpha_i{-}1}$,
		and $\text{Beta}(\alpha) = {\Gamma(\sum_{i=1}^{M}\alpha_i)}/{\prod_{i=1}^{M}\Gamma(\alpha_i)}$ is the multidimensional Beta function.
	\end{theorem}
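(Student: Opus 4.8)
\medskip
\noindent\emph{Proof plan.} The plan is to recognize~\ref{algo:main} as a push-sum (ratio-consensus) recursion driven by the log-likelihood inputs, decouple the network averaging from the accumulation of evidence at each node, and then identify the accumulated per-node evidence as the stated Dirichlet ratio by telescoping. First I would pass to logarithms. Writing $\nu^i_k=\log\mu^i_k(\theta)$, the belief update becomes $y^i_{k+1}\nu^i_{k+1}=\sum_{j\in N^i_k}\frac{y^j_k}{d^j_k+1}\nu^j_k+\log\ell^i_\theta(s^i_{k+1},n^i_{k+1})$. Collecting the weights $[A_k]_{ij}=1/(d^j_k+1)$ for $j\in N^i_k$ (and $0$ otherwise) into a matrix $A_k$, the auxiliary update is $y_{k+1}=A_ky_k$ with $y_0=\boldsymbol 1$, and setting $w^i_k=y^i_k\nu^i_k$ yields the linear recursion $w_{k+1}=A_kw_k+g_{k+1}$, where $[g_t]_i=\log\ell^i_\theta(s^i_t,n^i_t)$ and $\nu^i_k=w^i_k/y^i_k$. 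Each node retains mass $1/(d^i_k+1)$, so $A_k$ is column-stochastic with diagonal bounded below by $1/m$; this is precisely the push-sum protocol with an additive exogenous input.

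Next, unrolling the recursion from the standard initialization $\mu^i_0\equiv1$ (so $w_0=0$) gives $\nu^i_k=\sum_{t=1}^k\sum_{j=1}^m\frac{[A_{k-1:t}]_{ij}}{y^i_k}g^j_t$. Under Assumption~\ref{assum:connected}, standard push-sum results provide constants $C>0$ and $\lambda\in(0,1)$ with $\big|[A_{k-1:t}]_{ij}/y^i_k-1/m\big|\le C\lambda^{k-t}$ uniformly in $i,j$. Hence $\nu^i_k=\frac1m\sum_{j=1}^mS^j_k+E^i_k$ with $S^j_k=\sum_{t=1}^kg^j_t$ and $|E^i_k|\le C\sum_{t=1}^k\lambda^{k-t}\sum_j|g^j_t|$. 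Because $\ell^i_\theta(s^i_t,n^i_t)\to1$ as $[n^i_t]_{s^i_t}\to\infty$, the inputs satisfy $g^j_t\to0$ a.s., so the geometric convolution forces $E^i_k\to0$ and the limit reduces to the network average of the accumulated inputs $S^j_k$.

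The heart of the argument is to evaluate $\lim_kS^j_k$ by telescoping $\prod_{t=1}^k\ell^j_\theta$. Grouping factors by the observed symbol, the $r$-th occurrence of symbol $s_l$ contributes $([Z^j_\theta]_l+r)/r$, so the empirical part telescopes and the deterministic prefactor $\prod_{t=1}^k\frac{M+t-1}{R^j_\theta+t+M-1}$ telescopes to a ratio of Gamma functions, giving
\[
\sum_{t=1}^k g^j_t=\log\frac{\Gamma(M+k)\,\Gamma(R^j_\theta+M)}{\Gamma(M)\,\Gamma(R^j_\theta+M+k)}+\sum_{l}\log\frac{\Gamma([Z^j_\theta]_l+[n^j_k]_l+1)}{\Gamma([Z^j_\theta]_l+1)\,\Gamma([n^j_k]_l+1)}.
\]
Then I would invoke the strong law $[n^j_k]_l/k\to[p_{\theta^*}^j]_l$ a.s.\ together with $\Gamma(z+a)/\Gamma(z+b)\sim z^{a-b}$. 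The two $\pm R^j_\theta\log k$ divergences cancel, leaving the finite a.s.\ limit $\log\frac{\Gamma(R^j_\theta+M)}{\Gamma(M)\prod_l\Gamma([Z^j_\theta]_l+1)}+\sum_l[Z^j_\theta]_l\log[p_{\theta^*}^j]_l$, which is exactly $\log\big(\text{Dirichlet}(p_{\theta^*}^j;Z_\theta^j+\boldsymbol 1)/\text{Dirichlet}(p_{\theta^*}^j;\boldsymbol 1)\big)$. Substituting into the network average establishes the theorem.

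The main obstacle I expect is this exact cancellation combined with the interchange of the averaging and accumulation limits. Since $[n^j_k]_l/k\to[p_{\theta^*}^j]_l$ holds only almost surely, the $o(1)$ corrections in the Gamma asymptotics are themselves random and must be controlled carefully so that the divergent $\log k$ terms cancel exactly and $S^j_k$ converges a.s.; one must also verify that the geometric decay of the push-sum weights dominates the slow decay of $g^j_t$, guaranteeing $E^i_k\to0$ despite the partial sums $S^j_k$ not vanishing.
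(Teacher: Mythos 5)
Your proposal is correct and follows essentially the same route as the paper's own proof: the same logarithmic push-sum linearization $w_{k+1}=A_k w_k+g_{k+1}$ with geometrically ergodic products of column-stochastic matrices, the same geometric-convolution argument to kill the consensus error (using $\ell^i_\theta\to 1$ a.s.), and the same identification of the per-node accumulated sum $\sum_t g^j_t$ with the log uncertain likelihood ratio, whose almost-sure limit is obtained from Gamma-function asymptotics plus the strong law of large numbers. The only difference is organizational: you telescope the product of $\ell^j_\theta$ forward into closed-form Gamma ratios, whereas the paper first defines $\Lambda_\theta$ and factorizes it (Lemma~\ref{lemm:iter}) and separately computes its limit via Stirling's series applied to the ratio of Dirichlet-Multinomial distributions (Lemma~\ref{lemma:ratio} and~\eqref{eq:main_aux}) --- mathematically the same computation.
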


Theorem~\ref{thm:main} shows that log{-}beliefs, generated by the update rule~\ref{algo:main} on a hypothesis $\theta$ for every agent, converge asymptotically to the average log{-}likelihood ratio value of the distribution corresponding to the state of the world, with the prior empirical evidence counts as parameters. Note that this average has equal weights on the contributions of each of the agents, even{-}though we have not assumed the network is balanced. Moreover, this result also indicates that even if some agents do not have informative signals to build the statistical models for the hypothesis set, the information is aggregated over the network. Finally, note that if an agent does not have access to prior evidence $Z^i_\theta$ its isolated belief will be around $1$ for all hypothesis. As the size of the prior evidence increases, the belief on the wrong hypothesis goes to zero, and the belief on the correct hypothesis converges to a positive value. That positive value is larger for larger amounts of prior evidence.

	\section{Centralized Estimation with Uncertain Models}\label{sec:models}
	
	In this section, we analyze the effects of having uncertain models, in the sense that the family of distributions $(p^i_\theta;\theta\in \Theta)$ is not known precisely, but rather it is estimated from the realization of the random variable $Z^i_\theta$. For simplicity of exposition, we will focus on the case of a single agent, thus super indices will not be used. Later in Section~\ref{sec:convergence}, we will analyze the dynamics of the multi{-}agent case which will provide a proof for our result in Theorem~\ref{thm:main}.
	
	Consider the vector $p_\theta$ as a set of parameters to be estimated from the realization of the multinomial random variable $Z_\theta$. Recall that the probability mass function of the Multinomial distribution with parameters $p$ and $n$ evaluated at a point $x$, such that $\sum x_i =n$ and $x_i$ is a non-negative integer, is defined as $\text{Multinomial}(x; p,n)  = \frac{n!}{\prod x_i!} \prod p_i^{x_i}$.
	
	\noindent Assuming a uniform Dirichlet prior on the parameters $p_\theta$, i.e., $P(p_\theta) = \text{Dirichlet}(p_\theta ; \boldsymbol{1})$, it follows that the posterior is
	\begin{align}\label{eq:posterior}
	P(p_\theta \mid Z_\theta) & =  \text{Dirichlet}(p_\theta ; Z_\theta {+} \boldsymbol{1}).
	\end{align}
	
	Now, given the posterior distribution over the set of parameters for each of the hypotheses, one can define the surrogate likelihood function for hypotheses $\theta$ as the expected likelihood when the expectation is taken with respect to the parameter being distributed according to~\eqref{eq:posterior}, i.e.,
	\begin{align}\label{eq:hat_ell}
	\hat{\ell}_\theta(S_{k{+}1}|Z_\theta)  &= \mathbb{E}_{\nu \sim P(p_\theta \mid Z_\theta)} P_\nu(S_{k{+}1}) \nonumber \\
 	& = \int	P_\nu(S_{k{+}1}) P(\nu \mid Z_\theta) d\nu \nonumber\\
	& = \int	\nu_{S_{k{+}1}}  \text{Dirichlet}(\nu ; Z_\theta {+} \boldsymbol{1}) d\mathcal{S}_M \nonumber \\
	&= \frac{[Z_\theta]_{S_{k{+}1}}+\boldsymbol{1}}{R_\theta +M},
	\end{align}
	where $d\mathcal{S}_M $ denotes integrating $\nu = [\nu_1,\dots,\nu_M]$ with respect to the $(M{-}1)$ simplex. Therefore, under this construction, the surrogate likelihood is effectively the empirical distribution (or histogram) generated by the information provided by the random variable $Z_\theta$. More importantly, it follows from the strong law of large numbers that $
	\lim_{R_\theta \to  \infty} \hat{\ell}_\theta(S_k|Z_\theta)  = [p_\theta]_{S_{k}}, \ \forall \theta \in \Theta, \ k \geq 1$. a.s.
	
	Figure~\ref{fig:simplex} shows a geometric interpretation of the process of defining the surrogate likelihood functions. The triangle represents the probability simplex over the distributions of the signals, thus, each point in the simplex is a distribution. Initially, if no data about the hypothesis has been acquired, both hypotheses will be in the same point in the simplex, namely the point corresponding to the uniform distribution, drawn as a black dot in the middle of the simplex. As $R_{\theta_1}$ and $R_{\theta_2}$ increases, the location of the surrogate likelihoods follows a path that ends at the correct location of the distribution for the hypothesis. 
	
	\begin{figure}[ht]
		\centering
		\resizebox{!}{0.4\textwidth}{
		\begin{overpic}[width=0.3\textwidth]{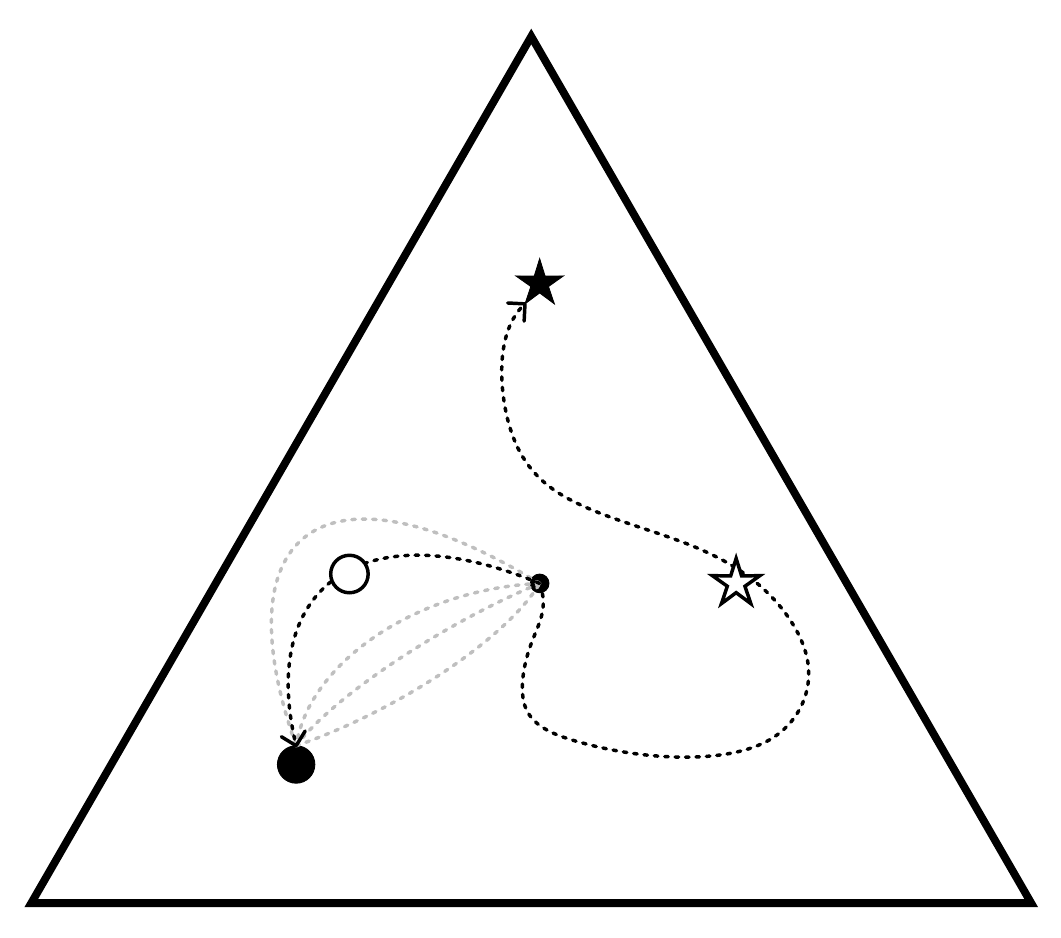}
			\put(41,66){{\small $p_{\theta^*}\texttt{=}p_{\theta_2}$}}
			\put(59,25){{\small $\hat\ell_{\theta_2}(\cdot|Z_{\theta_2})$}}
			\put(27,09){{\small $p_{\theta_1}$}}
			\put(25,38){{\small $\hat\ell_{\theta_1}(\cdot|Z_{\theta_1})$}}
		\end{overpic} 
		}
		\caption{\textbf{Geometric interpretation of the learning problem with uncertain likelihoods.} The outer triangle represents the probability simplex over the distributions of the signals $S_k$. The filled circle represents the location of the hypothesis $\theta_1$, and the filled star represents the location of the probability $\theta_2$ that is also $\theta^*$. The white filled circle represents the location of the surrogate likelihood of hypothesis $\theta_1$ for a specific realization of $Z_{\theta_1}$. The white filled star represents the location of the surrogate likelihood of hypothesis $\theta_2$ for a specific realization of $Z_{\theta_2}$. }
		\label{fig:simplex}
	\end{figure}
	
	Given the assumption that $R_{\theta_1}$ and $R_{\theta_2}$ are finite, their relative location with respect to the hypothesis $\theta^*$ might not be preserved. For example, in Figure~\ref{fig:simplex}, hypothesis $\theta_2$ is the true hypothesis. However, given some particular realizations of $Z_{\theta_1}$ and $Z_{\theta_2}$, the distribution $\hat\ell_{\theta_1}(\cdot|Z_{\theta_1})$ could be ``closer" to $p_{\theta^*}$ than the distribution $\hat\ell_{\theta_2}(\cdot|Z_{\theta_2})$. This result is formalized in the following proposition.
	
	\begin{proposition}\label{prop:not_converge}
		Consider two finite probability distributions $p_{\theta_1}$ and $p_{\theta_2}$, and their surrogate likelihood functions $\hat\ell_{\theta_1}(\cdot|Z_{\theta_1})$ and $\hat\ell_{\theta_2}(\cdot|Z_{\theta_2})$ defined as in~\eqref{eq:hat_ell}. Then, there exists non negative integers $R_{\theta_1}$ and $R_{\theta_2}$ such that 
		\begin{align*}
		Prob\big( D_{KL}(p_{\theta^*}\|\hat\ell_{\theta_1}(\cdot|Z_{\theta_1})) < D_{KL}(p_{\theta^*}\|\hat\ell_{\theta^*}(\cdot|Z_{\theta_2}))\big)   > 0.
		\end{align*} 
	\end{proposition}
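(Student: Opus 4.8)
The plan is to prove the claim by exhibiting a single pair of prior realizations that already forces the event, and then to invoke the fact that individual multinomial outcomes carry strictly positive probability. Concretely, since $Z_{\theta_1}$ and $Z_{\theta_2}$ are independent (corresponding to separate prior experiments) and each realization in the support of a multinomial occurs with strictly positive probability, it suffices to produce integers $R_{\theta_1},R_{\theta_2}$ together with realizations $z_1,z_2$ in the respective supports for which
\[
D_{KL}\big(p_{\theta^*}\,\|\,\hat\ell_{\theta_1}(\cdot|z_1)\big) < D_{KL}\big(p_{\theta^*}\,\|\,\hat\ell_{\theta_2}(\cdot|z_2)\big).
\]
The event in the proposition then contains $\{Z_{\theta_1}=z_1\}\cap\{Z_{\theta_2}=z_2\}$, whose probability factorizes and is strictly positive, which closes the argument.

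First I would control the correct hypothesis $\theta_2=\theta^*$. Recalling from~\eqref{eq:hat_ell} that $\hat\ell_{\theta_2}(\cdot|Z_{\theta_2}) = (Z_{\theta_2}+\boldsymbol{1})/(R_{\theta_2}+M)$, for any finite $R_{\theta_2}$ the surrogate takes values on a finite grid of smoothed histograms. Choosing $R_{\theta_2}$ small (e.g.\ $R_{\theta_2}=1$) and picking a reachable $z_2$ whose smoothed histogram differs from $p_{\theta^*}$---which exists whenever $p_{\theta^*}$ is not concentrated on a single symbol---yields a strictly positive gap $\delta := D_{KL}(p_{\theta^*}\|\hat\ell_{\theta_2}(\cdot|z_2))>0$. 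This realization is reachable, hence occurs with positive probability.

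Next I would make the wrong hypothesis $\theta_1$ look arbitrarily good. As $R_{\theta_1}\to\infty$ the smoothed histograms $(z+\boldsymbol{1})/(R_{\theta_1}+M)$ become dense in the face of the simplex supported on $\mathrm{supp}(p_{\theta_1})$; hence for $R_{\theta_1}$ large enough there is a realization $z_1\approx R_{\theta_1}\,p_{\theta^*}$ with $\hat\ell_{\theta_1}(\cdot|z_1)$ within any prescribed $\epsilon$ of $p_{\theta^*}$, so that $D_{KL}(p_{\theta^*}\|\hat\ell_{\theta_1}(\cdot|z_1))<\epsilon$. This $z_1$ lies in the support of $Z_{\theta_1}$ because its coordinates are supported where $p_{\theta_1}$ is positive. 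Taking $\epsilon<\delta$ then gives the displayed inequality, and the positive-probability conclusion follows from the first paragraph.

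The main obstacle is the support/absolute-continuity condition implicit in the density step: the construction needs $\mathrm{supp}(p_{\theta^*})\subseteq\mathrm{supp}(p_{\theta_1})$, so that the wrong model can place its empirical mass where $p_{\theta^*}$ lives. In the generic full-support situation depicted in Figure~\ref{fig:simplex}, where both hypotheses are interior points of the simplex, this holds automatically and the argument is complete. The degenerate case---e.g.\ $p_{\theta_1}$ a vertex whose support misses that of $p_{\theta^*}$---must be excluded, since there $\hat\ell_{\theta_1}$ is confined to a proper face and stays bounded away from $p_{\theta^*}$, forcing the probability to vanish; I would therefore state the result under the standing full-support assumption, noting that this is the only configuration requiring care.
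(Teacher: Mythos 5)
Your proof is correct, and in spirit it is the same kind of argument the paper makes: exhibit a concrete choice of $R_{\theta_1},R_{\theta_2}$ together with realizations of the prior evidence that force the KL inequality, then note that those realizations occur with strictly positive probability because the multinomials have finite support. The difference is in degree of rigor and in the specific configuration. The paper's ``proof'' is a two-sentence sketch: it takes $R_{\theta_2}=0$, which pins $\hat\ell_{\theta_2}(\cdot|Z_{\theta_2})$ at the uniform distribution (the center of the simplex), and then appeals to Figure~\ref{fig:simplex} for the existence of a positive-probability realization of $Z_{\theta_1}$ whose surrogate lands KL-closer to $p_{\theta^*}$ than uniform. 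Your construction instead takes $R_{\theta_2}=1$ to manufacture a gap $\delta>0$ and sends $R_{\theta_1}\to\infty$ so that some empirical histogram $z_1\approx R_{\theta_1}\,p_{\theta^*}$ achieves KL below $\delta$; this actually proves the approximation step that the paper merely asserts by picture. One small thing the paper's choice buys that yours does not: with $R_{\theta_2}=0$ the surrogate for $\theta_2$ is deterministic, so only the marginal law of $Z_{\theta_1}$ matters, whereas your argument genuinely needs the (natural but unstated) independence of $Z_{\theta_1}$ and $Z_{\theta_2}$ to factorize the joint event. Finally, your closing caveat is a genuine observation that the paper glosses over: some support condition such as $\mathrm{supp}(p_{\theta^*})\subseteq\mathrm{supp}(p_{\theta_1})$ is needed, and without it the proposition as stated can fail --- e.g., if $p_{\theta_1}$ and $p_{\theta^*}=p_{\theta_2}$ are point masses on distinct symbols, both surrogates are deterministic and a short computation shows the strict inequality holds with probability zero for every choice of $R_{\theta_1},R_{\theta_2}$. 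The paper's picture implicitly lives in the full-support regime, so adding that hypothesis, as you do, is the right way to make the claim rigorous.
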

	
	Proposition~\ref{prop:not_converge} can be achieved in this following example. Suppose that the agent has collected some evidence for $\theta_1$ and the path in Fig. 2 leads the white circle to a position in the upper half of the triangle. Then, if the agent has zero prior evidence for $\theta_2$, the proposition is true. 
	
	One consequence of Proposition~\ref{prop:not_converge} is that an agent using surrogate likelihoods instead of the precise likelihood functions may discard a true hypothesis with a non{-}zero probability.
	
	\begin{corollary}\label{cor:no_convergence}
		Consider the following update rule, with $\mu_0(\theta) >0$ for all $\theta \in \Theta$:
		\begin{align*}
		\mu_{k{+}1}(\theta) & = \frac{\hat\ell_{\theta}(S_{k{+}1}|Z_{\theta}) \mu_k(\theta)}{\sum_{\nu \in \Theta} \hat\ell_{\nu}(S_{k{+}1}|Z_{\nu}) \mu_k(\nu)},
		\end{align*}
		where $S_k \sim p_{\theta^*}$.  Then, there exists finite $R_{\theta_1}$ and $R_{\theta_2}$ such that $
		Prob\left(  \lim_{k \to \infty}\mu_{k}(\theta^*)  = 0\right)  > 0$.
	\end{corollary}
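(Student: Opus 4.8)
The plan is to reduce to the two-hypothesis instance $\Theta = \{\theta_1,\theta_2\}$ with $\theta^* = \theta_2$ — precisely the configuration of Proposition~\ref{prop:not_converge} — and to track the evolution of the log-ratio of the two beliefs. First I would note that the normalizing denominator in the update rule cancels upon forming the ratio $\mu_{k+1}(\theta_2)/\mu_{k+1}(\theta_1)$, so that
\[
\log\frac{\mu_{k+1}(\theta_2)}{\mu_{k+1}(\theta_1)} = \log\frac{\mu_k(\theta_2)}{\mu_k(\theta_1)} + \log\frac{\hat\ell_{\theta_2}(S_{k+1}|Z_{\theta_2})}{\hat\ell_{\theta_1}(S_{k+1}|Z_{\theta_1})}.
\]
Telescoping this recursion from the initial beliefs $\mu_0$ gives
\[
\log\frac{\mu_k(\theta_2)}{\mu_k(\theta_1)} = \log\frac{\mu_0(\theta_2)}{\mu_0(\theta_1)} + \sum_{t=1}^{k} \log\frac{\hat\ell_{\theta_2}(S_t|Z_{\theta_2})}{\hat\ell_{\theta_1}(S_t|Z_{\theta_1})},
\]
which isolates the cumulative log-likelihood ratio as the quantity governing the asymptotics.

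The next step is to condition on the prior evidence. Fixing a realization of $(Z_{\theta_1},Z_{\theta_2})$, the surrogate likelihoods from~\eqref{eq:hat_ell} become deterministic distributions that are strictly positive on all of $\mathcal{S}$ — the uniform Dirichlet prior contributes the $+\boldsymbol{1}$ — so the summands $\log(\hat\ell_{\theta_2}(S_t)/\hat\ell_{\theta_1}(S_t))$ are bounded i.i.d.\ random variables under $S_t\sim p_{\theta^*}$. By the strong law of large numbers the normalized sum converges almost surely to
\[
\mathbb{E}_{S\sim p_{\theta^*}}\!\left[\log\frac{\hat\ell_{\theta_2}(S)}{\hat\ell_{\theta_1}(S)}\right] = D_{KL}(p_{\theta^*}\|\hat\ell_{\theta_1}(\cdot|Z_{\theta_1})) - D_{KL}(p_{\theta^*}\|\hat\ell_{\theta_2}(\cdot|Z_{\theta_2})),
\]
where the identity follows by adding and subtracting $\log[p_{\theta^*}]_S$ inside the expectation and recognizing the two resulting sums as KL divergences.

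I would then invoke Proposition~\ref{prop:not_converge}, which supplies finite $R_{\theta_1},R_{\theta_2}$ for which the event $A = \{D_{KL}(p_{\theta^*}\|\hat\ell_{\theta_1}) < D_{KL}(p_{\theta^*}\|\hat\ell_{\theta_2})\}$ has positive probability over the draw of $(Z_{\theta_1},Z_{\theta_2})$. On $A$ the almost-sure limit above is strictly negative, so the partial sums diverge to $-\infty$, forcing $\mu_k(\theta_2)/\mu_k(\theta_1)\to 0$; since the two beliefs sum to one, this yields $\mu_k(\theta^*)=\mu_k(\theta_2)\to 0$ almost surely in the signal randomness. Finally, conditioning on the prior evidence $Z=(Z_{\theta_1},Z_{\theta_2})$ and using the tower property, $Prob(\lim_k\mu_k(\theta^*)=0)$ is at least $Prob(A)$ times the conditional probability that the limit vanishes given $Z\in A$, and the latter conditional probability equals one; hence $Prob(\lim_k\mu_k(\theta^*)=0)\geq Prob(A)>0$, which is the claim.

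The main obstacle — in fact the only nontrivial ingredient — is already isolated in Proposition~\ref{prop:not_converge}: that finite-sample surrogate likelihoods can, with positive probability, place the wrong hypothesis strictly closer in KL divergence to $p_{\theta^*}$. Everything downstream is the standard log-ratio bookkeeping of non-Bayesian learning together with a routine strong law, the only care being to keep the two independent randomness sources separate (the one-shot prior evidence $Z$ versus the sequential signals $\{S_k\}$) and to confirm that the surrogate likelihoods are bounded away from zero so the log-ratios remain finite.
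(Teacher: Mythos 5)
Your proof is correct and follows exactly the route the paper intends: the paper gives no explicit proof of Corollary~\ref{cor:no_convergence}, presenting it as an immediate consequence of Proposition~\ref{prop:not_converge}, and your argument (cancel the normalizer in the belief ratio, apply the strong law to the i.i.d.\ log-likelihood-ratio increments conditioned on the prior evidence $Z$, and recognize the limit as the KL-divergence gap that Proposition~\ref{prop:not_converge} makes negative with positive probability) is precisely the standard bookkeeping being omitted. The only care you rightly flag --- that the surrogate likelihoods are bounded away from zero thanks to the $+\boldsymbol{1}$ from the uniform Dirichlet prior, and that the one-shot randomness of $Z$ must be separated from the sequential randomness of $\{S_k\}$ via the tower property --- is handled correctly.
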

	
	In the next subsection, we propose a different form of surrogate likelihood function in order to overcome the probability of erroneously discarding the correct hypothesis due to the finiteness of the data from which the surrogate likelihoods are constructed.
	
	\subsection{The Uncertain Likelihood Ratio} 
	
	In order to avoid the asymptotic convergence to an erroneous hypothesis, as presented in Corollary~\ref{cor:no_convergence}, we propose the following alternative to construct a surrogate likelihood function, which we will denote as the \textit{uncertain likelihood ratio}. Initially, assume that the agent not only has access to the most recent realization of $S_{k{+}1}$, but recalls the histogram of all the signals observed so far, i.e., it has access to the counts of how many times a symbol has appeared up to time $k{+}1$. We will denote this count as $n_{k{+}1}$, where $[n_{k{+}1}]_l$ is the number of times the symbol $s_l$ has been observed.
	
	Similarly as in~\eqref{eq:hat_ell}, we will proceed to define the uncertain likelihood ratio as the posterior predictive distribution of the likelihood of the signal when the parameter is distributed according to the posterior distribution in~\eqref{eq:posterior}. However, now that we have access to $n_{k{+}1}$, we can consider $n_{k{+}1}$ as our new signal, which will be the realization of a Multinomial random variable $N_{k}$ with parameters $p_{\theta^*}$ and $k$. 
	
	Therefore, we define
	\begin{align}\label{eq:hat_ell_multi}
	&\hat{\ell}_\theta(N_{k{+}1}|Z_\theta) \nonumber = \mathbb{E}_{\nu \sim P(p_\theta \mid Z_\theta)} P_\nu(N_{k{+}1}) \nonumber \\
 	& = \int	P_\nu(N_{k{+}1}) P(\nu \mid Z_\theta) d\nu \nonumber\\
	& = \int	\text{Multinomial}(N_{k{+}1}; \nu,k)  \text{Dirichlet}(\nu ; Z_\theta {+} \boldsymbol{1}) d\mathcal{S}_M \nonumber \\
 	& = \frac{k!\Gamma(R_\theta {+} M)}{\Gamma(k {+} R_\theta {+} M)} \prod_{i=1}^{M} \frac{\Gamma([N_{k{+}1}]_i {+} [Z_{\theta}]_{i} {+}1)}{[N_{k{+}1}]_i! \Gamma([Z_{\theta}]_{i} {+}1)}.
	\end{align}

	Note that the surrogate likelihood in~\eqref{eq:hat_ell_multi} is precisely a Dirichlet{-}Multinomial distribution with parameters $Z_\theta {+}\boldsymbol{1}$. Recall that the Dirichlet{-}Multinomial probability mass function, with parameters $\alpha$ and $n$, is defined as as
	\begin{align*}
	\text{Dirichlet{-}Multinomial}(x;\alpha,n) & = \frac{n! \Gamma(\sum \alpha_i)}{\Gamma(n 
		{+} \sum \alpha_i)} \prod \frac{\Gamma(x_i{+}\alpha_i)}{x_i! \Gamma(\alpha_i)}.
	\end{align*} 
	
	Finally, we propose the uncertain likelihood ratio function as the ratio between the surrogate likelihood function in~\eqref{eq:hat_ell_multi}, and the surrogate likelihood function in~\eqref{eq:hat_ell_multi} given no prior evidence for the hypothesis set, i.e., $Z_\theta = \boldsymbol{0}$ or complete uncertainty. Thus, the uncertain likelihood function for the histogram of the signals observed up to time $k{+}1$ is defined as
	\begin{align}\label{eq:RUL}
	&\Lambda_\theta(N_{k{+}1}|Z_\theta )  = \frac{\hat{\ell}_\theta(N_{k{+}1}|Z_\theta)}{\hat{\ell}_\theta(N_{k{+}1}| \boldsymbol{0})} \nonumber\\
 	& \qquad = \frac{\text{Beta}(Z_{\theta} {+} N_{k{+}1}{+}\mathbf{1})\text{Beta}(\mathbf{1})}{\text{Beta}(Z_{\theta}{+}\mathbf{1})\text{Beta}(N_{k{+}1}{+}\mathbf{1})} \nonumber\\
	& \qquad= \frac{\text{Dirichlet{-}Multinomial}(N_{k{+}1};Z_\theta{+}\mathbf{1},k{+}1)}{\text{Dirichlet{-}Multinomial}(N_{k{+}1};\boldsymbol{1},k{+}1)}.
	\end{align}
	
	\subsection{The Asymptotic Behavior of the Ratio of Dirichlet{-}Multinomial Distributions}
	
	In this subsection, we derive some asymptotic properties of the ratio between two Dirichlet{-}Multinomial likelihood functions.
	
	Initially, consider a sequence of random variables $\{X_k \sim \text{Multinomial}(p,k) \}$, and define the random variable
	\begin{align}\label{eq:dm_ratio}
	Y_k & = \frac{\text{Dirichlet{-}Multinomial}(X_k;\alpha,n)}{\text{Dirichlet{-}Multinomial}(X_k;\beta,n)},
	\end{align}
	where $\alpha = [\alpha_1,\cdots,\alpha_M]$, and $\beta = [\beta_1,\cdots,\beta_M]$ are vectors with strictly positive entries.
	
	The next lemma describes the asymptotic behavior of the random variable $Y_k$.
	\begin{lemma}\label{lemma:ratio}
		The random variable $Y_k$ in~\eqref{eq:dm_ratio} has the following property:
		\begin{align*}
		    \lim_{k \to \infty} Y_k  =  {\text{Dirichlet}(p;\alpha{+}\boldsymbol{1})}/{\text{Dirichlet}(p;\beta{+}\boldsymbol{1})}, \ \ \text{a.s}.
		\end{align*}
	\end{lemma}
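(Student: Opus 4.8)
The strategy is to reduce $Y_k$ to an explicit ratio of Gamma functions, follow its growth through the strong law of large numbers, and exhibit an exact cancellation of the divergent factors that leaves the claimed ratio of Dirichlet densities. Writing $A=\sum_{i=1}^{M}\alpha_i$ and $B=\sum_{i=1}^{M}\beta_i$, denoting the coordinates of $X_k$ by $X_{k,i}$, and noting that the number of trials is $n=k$ so that $X_k$ is a valid argument of both Dirichlet{-}Multinomial masses, the shared multinomial coefficient $k!/\prod_i X_{k,i}!$ cancels in~\eqref{eq:dm_ratio} and leaves
\begin{align*}
Y_k=\frac{\Gamma(A)}{\Gamma(B)}\,\frac{\Gamma(k+B)}{\Gamma(k+A)}\,\prod_{i=1}^{M}\frac{\Gamma(\beta_i)}{\Gamma(\alpha_i)}\,\frac{\Gamma(X_{k,i}+\alpha_i)}{\Gamma(X_{k,i}+\beta_i)}.
\end{align*}

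I would then take logarithms and use two ingredients. First, since $X_k\sim\mathrm{Multinomial}(p,k)$, the strong law of large numbers gives $X_{k,i}/k\to p_i$ almost surely, so on every coordinate with $p_i>0$ we have $X_{k,i}\to\infty$ and $\log X_{k,i}=\log k+\log p_i+o(1)$ a.s. Second, the standard Gamma asymptotic $\log\Gamma(x+a)-\log\Gamma(x+b)=(a-b)\log x+O(1/x)$ as $x\to\infty$ turns the normalizing factor into $\log\Gamma(k+B)-\log\Gamma(k+A)=(B-A)\log k+o(1)$ and each coordinate factor into $(\alpha_i-\beta_i)\log X_{k,i}$, whose sum equals $(A-B)\log k+\sum_{i}(\alpha_i-\beta_i)\log p_i+o(1)$ a.s.

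The crux of the argument, and the step I would highlight, is that the two terms that diverge with $k$, namely $(B-A)\log k$ from the global normalization and $(A-B)\log k$ from the coordinatewise product, cancel identically; this is exactly what pins $Y_k$ to a finite, strictly positive limit and reflects the concentration of $X_k$ on the simplex at rate $k$. Collecting the surviving constants gives
\begin{align*}
\log Y_k\;\longrightarrow\;\log\frac{\Gamma(A)}{\Gamma(B)}+\sum_{i=1}^{M}\log\frac{\Gamma(\beta_i)}{\Gamma(\alpha_i)}+\sum_{i=1}^{M}(\alpha_i-\beta_i)\log p_i\qquad\text{a.s.},
\end{align*}
so that $\lim_{k}Y_k=\frac{\Gamma(A)}{\Gamma(B)}\prod_{i}\frac{\Gamma(\beta_i)}{\Gamma(\alpha_i)}\prod_{i}p_i^{\alpha_i-\beta_i}$ a.s. Recognizing the Gamma factors as the normalizing constants of the Dirichlet density introduced in Theorem~\ref{thm:main} identifies this product with the stated ratio; the convergence is almost sure because every manipulation is a continuous function of the almost surely convergent quantities $X_{k,i}/k$.

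Finally I would dispose of the boundary coordinates: if $p_i=0$ then $X_{k,i}=0$ for all $k$ almost surely, the factor $\Gamma(\beta_i)\Gamma(X_{k,i}+\alpha_i)/\bigl(\Gamma(\alpha_i)\Gamma(X_{k,i}+\beta_i)\bigr)$ equals $1$ for every $k$ and drops out, consistent with the convention $p_i^{\alpha_i-\beta_i}=1$ off the support, so the asymptotics above are carried out only over $\{i:p_i>0\}$. I expect the one place demanding real care to be controlling the $o(1)$ error terms tightly enough that the $(B-A)\log k$ and $(A-B)\log k$ contributions cancel exactly rather than up to a slowly varying factor; once this is isolated, the rest is routine.
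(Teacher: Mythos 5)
Your derivation follows essentially the same route as the paper's proof: cancel the common multinomial coefficient, reduce $Y_k$ to an explicit ratio of Gamma functions, apply the Stirling-type asymptotic $\Gamma(x{+}a)/\Gamma(x{+}b)=x^{a-b}\left(1+O(1/x)\right)$, and invoke the strong law of large numbers for $X_k/k\to p$. The paper carries this out multiplicatively (its $O\left(1+\frac{1}{k}\right)$ correction factors), while you work in log space; the exact cancellation of the $(B{-}A)\log k$ and $(A{-}B)\log k$ terms that worries you (writing $A=\sum_i\alpha_i$, $B=\sum_i\beta_i$ as you do) is automatic, since both coefficients are exact constants, $\log X_{k,i}=\log k+\log(X_{k,i}/k)$ holds exactly, and all remaining errors are $o(1)$ almost surely; no additional care is needed there. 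Your treatment of coordinates with $p_i=0$ is a point the paper silently ignores.

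However, your closing identification is wrong as written, and it conceals the one genuine subtlety. The limit you correctly computed,
\begin{align*}
\frac{\Gamma(A)}{\Gamma(B)}\prod_{i=1}^{M}\frac{\Gamma(\beta_i)}{\Gamma(\alpha_i)}\prod_{i=1}^{M}p_i^{\alpha_i-\beta_i},
\end{align*}
equals $\text{Dirichlet}(p;\alpha)/\text{Dirichlet}(p;\beta)$ with the \emph{unshifted} parameters, not the stated $\text{Dirichlet}(p;\alpha{+}\boldsymbol{1})/\text{Dirichlet}(p;\beta{+}\boldsymbol{1})$: with the shift the constant would instead be $\Gamma(A{+}M)\prod_i\Gamma(\beta_i{+}1)\big/\left(\Gamma(B{+}M)\prod_i\Gamma(\alpha_i{+}1)\right)$, and the two differ in general. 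The paper's own proof also terminates at $\text{Dirichlet}(p;\alpha)/\text{Dirichlet}(p;\beta)$, and the unshifted version is the one consistent with how the lemma is used downstream: applying it to~\eqref{eq:RUL} with $\alpha=Z_\theta{+}\boldsymbol{1}$ and $\beta=\boldsymbol{1}$ must yield~\eqref{eq:main_aux} and the ratio $\text{Dirichlet}(p;Z_\theta{+}\boldsymbol{1})/\text{Dirichlet}(p;\boldsymbol{1})$ appearing in Theorem~\ref{thm:main}. So the ``$+\boldsymbol{1}$'' in the statement of Lemma~\ref{lemma:ratio} is a typo; your computation is right and agrees with the paper's proof, but you should have flagged the mismatch between your constant and the stated one rather than asserting they coincide.
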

	
	\begin{proof}
		We can explicitly write the likelihood ratio as 
		\begin{align*}
		\frac{\text{Dirichlet{-}Multinomial}(X_k;\alpha,n)}{\text{Dirichlet{-}Multinomial}(X_k;\beta,n)} & = \frac{\frac{\Gamma(\sum \alpha_i)}{\Gamma(\sum \beta_i)}}{\frac{\Gamma(k {+} \sum \alpha_i)}{\Gamma(k {+} \sum \beta_i)}} \prod_{i=1}^M 
		\frac{\frac{\Gamma([X_k]_i{+}\alpha_i)}{\Gamma([X_k]_i{+}\beta_i)}}{\frac{ \Gamma(\alpha_i)}{ \Gamma(\beta_i)}}.
		\end{align*}
		
		Moreover, we can approximate the ratio of Gamma functions using Stirling's series~\cite{LAFORGIA2012833}, where 	
		{\small
			\begin{align*}
			& \Gamma\left( k {{+}} \sum_{i=1}^{M}\alpha_i\right) \bigg / \Gamma\left( k{{+}}\sum_{i=1}^{M}\beta_i\right)   \\
			&= k^{\sum\limits_{i=1}^{M}(\alpha_i\text{{-}}\beta_i)} \left(1\text{{+}}{(\sum\limits_{i=1}^{M}(\alpha_i{{-}}\beta_i))(\sum\limits_{i=1}^{M}(\alpha_i{{-}}\beta_i){{-}}1)}/{2k}\text{{+}} O({1}/{k^2})\right),
			\end{align*}
		}
		and
		\begin{align*}
		&\frac{\Gamma\left( [X_k]_i {+} \alpha_i\right) }{\Gamma\left( [X_k]_i {+} \beta_i\right) }  \\ 
		&= [X_k]_i^{\alpha_i {-}\beta_i} \left(1 {+}\frac{(\alpha_i {-}\beta_i)(\alpha_i {-}\beta_i{-}1)}{2[X_k]_i} {+} O([X_k]_i^{{-}2})\right).
		\end{align*}
		Therefore,
		\begin{align*}
		\frac{\prod_{i=1}^{M}\Gamma([X_k]_i {+}\alpha_i)}{\Gamma\left( k {+} \sum_{i=1}^{M}\alpha_i\right)} & = \prod_{i=1}^{M}  \left(\frac{[X_k]_i}{k}\right)^{\alpha_i} \cdot O\left(1 {+} \frac{1}{k}\right),
		\end{align*}
		and
		\begin{align*}
		\frac{\prod_{i=1}^{M}\Gamma([X_k]_i {+}\beta_i)}{\Gamma\left( k {+} \sum_{i=1}^{M}\beta_i\right)} & = \prod_{i=1}^{M}  \left(\frac{[X_k]_i}{k}\right)^{\beta_i} \cdot O\left(1 {+} \frac{1}{k}\right).
		\end{align*}
		
		Furthermore, it holds that
		\begin{align*}
		&\frac{\text{Dirichlet{-}Multinomial}(X_k;\alpha,k)}{\text{Dirichlet{-}Multinomial}(X_k;\beta,k)} \\
		& = \frac{\frac{\Gamma(\sum_{i=1}^M \alpha_i)}{\prod_{i=1}^{M} \Gamma(\alpha_i)} \prod_{i=1}^{M}  \left(\frac{[X_k]_i}{k}\right)^{\alpha_i} \cdot O\left(1 {+} \frac{1}{k}\right) }{ \frac{\Gamma(\sum_{i=1}^M \beta_i)}{\prod_{i=1}^{M} \Gamma(\beta_i)} \prod_{i=1}^{M}  \left(\frac{[X_k]_i}{k}\right)^{\beta_i} \cdot O\left(1 {+} \frac{1}{k}\right) } \\
		& = \frac{\frac{\Gamma(\sum_{i=1}^M \alpha_i)}{\prod_{i=1}^{M} \Gamma(\alpha_i)} \prod_{i=1}^{M}  \left(\frac{[X_k]_i}{k}\right)^{\alpha_i{-}1} \cdot O\left(1 {+} \frac{1}{k}\right) }{ \frac{\Gamma(\sum_{i=1}^M \beta_i)}{\prod_{i=1}^{M} \Gamma(\beta_i)} \prod_{i=1}^{M}  \left(\frac{[X_k]_i}{k}\right)^{\beta_i{-}1} \cdot O\left(1 {+} \frac{1}{k}\right) } \\
		& = \frac{\text{Dirichlet}(X_k/k;\alpha)\cdot O\left(1 {+} \frac{1}{k}\right)}{\text{Dirichlet}(X_k/k;\beta)\cdot O\left(1 {+} \frac{1}{k}\right)}.
		\end{align*}
		
		Finally, note that by the strong law of large number it follows that
		\begin{align*}
		\lim_{k\to \infty}  Y_k & =  \lim_{k\to \infty}\frac{\text{Dirichlet}(X_k/k;\alpha)\cdot O\left(1 {+} \frac{1}{k}\right)}{\text{Dirichlet}(X_k/k;\beta)\cdot O\left(1 {+} \frac{1}{k}\right)}\\
		& = \frac{\text{Dirichlet}(p;\alpha)}{\text{Dirichlet}(p;\beta)}, \quad \text{a.s.}
		\end{align*}
	\end{proof}
	
	It immediately follows from Lemma~\ref{lemma:ratio} and~\eqref{eq:RUL} that
	\begin{align}\label{eq:main_aux}
	\lim_{k \to \infty} \Lambda_{\theta} (N_{k}|Z_\theta ) & = \frac{\text{Beta}(\mathbf{1})}{\text{Beta}(Z_{\theta}{+}\boldsymbol{1})} \prod_{i=1}^M [p_{\theta^*}]_i^{[Z_{\theta}]_i}, \qquad \text{a.s.}
	\end{align}
	
	\subsection{An Iterative Representation of the Uncertain Likelihood Ratio }
	
	We have defined the uncertain likelihood ratio and analyzed its asymptotic behavior. However, the random variable $N_k$, which counts the realizations observed up to time $k$, is not independent across the time. Clearly, the counts at time $k{+}1$ depends on the counts at time $k$. This hinders the execution of a sequential algorithm that uses the most recent observations and the counts so far. Thus, the next lemma writes the ratio uncertain likelihood as a product of likelihoods.
	
	\begin{lemma}\label{lemm:iter}
		The uncertain likelihood ration in~\eqref{eq:RUL} can be expressed as $
		\Lambda_{\theta} (N_{k}|Z_\theta )  = \prod_{t=1}^{k} \ell_{\theta}(S_{t},N_{t}),
		$
		where $S_{k}$ is the symbol observed at time $k$, and
		\begin{align*}
		\ell_{\theta}(S_{t},N_{t}) & = \frac{\left( [Z_\theta]_{S_{t}} {+} [N_{t}]_{S_{t}}\right)\left(M{+}t {-}1\right)  }{\left( R_\theta {+} t {+}M {-}1\right) \left([N_{t}]_{S_{t}}  \right)  }.
		\end{align*}
	\end{lemma}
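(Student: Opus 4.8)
The plan is to prove the identity by a telescoping argument that exploits the fact that the histogram vectors at consecutive times differ in exactly one coordinate. First I would establish the base case $\Lambda_\theta(N_0|Z_\theta) = 1$: when no signals have been observed, $N_0 = \boldsymbol{0}$ and the Multinomial count parameter in~\eqref{eq:RUL} is $0$, so both the numerator $\hat\ell_\theta(N_0|Z_\theta)$ and the denominator $\hat\ell_\theta(N_0|\boldsymbol{0})$ reduce to the Dirichlet-Multinomial pmf evaluated at the empty histogram, which equals $1$ (the $0!$ and all $\Gamma(\alpha_i)/\Gamma(\alpha_i)$ factors collapse). Hence $\Lambda_\theta(N_0|Z_\theta)=1$ and the claimed identity holds trivially for $k=0$ as an empty product.

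Next I would rewrite the full ratio as the telescoping product
\[
\Lambda_\theta(N_k|Z_\theta) = \Lambda_\theta(N_0|Z_\theta)\prod_{t=1}^{k} \frac{\Lambda_\theta(N_t|Z_\theta)}{\Lambda_\theta(N_{t-1}|Z_\theta)},
\]
so that, by the base case, it suffices to identify each single-step ratio with $\ell_\theta(S_t,N_t)$. The crucial structural observation is that observing the symbol $S_t$ at time $t$ increments the count vector by a single unit: $[N_t]_{S_t} = [N_{t-1}]_{S_t}+1$ and $[N_t]_i = [N_{t-1}]_i$ for every $i \ne S_t$. This is what makes all but one factor in the coordinate product cancel between $N_t$ and $N_{t-1}$.

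Then I would substitute the Gamma-function form of the Dirichlet-Multinomial from~\eqref{eq:hat_ell_multi}, using $\sum_i[Z_\theta]_i = R_\theta$ and $\sum_i[N_t]_i = t$. In the single-step ratio the factorial prefactors and the $\Gamma([Z_\theta]_i+1)$, $\Gamma(1)$ terms cancel against the $Z_\theta = \boldsymbol{0}$ copy in the denominator, and repeated application of $\Gamma(x+1)=x\Gamma(x)$ collapses the global normalization $\Gamma(t+M)\Gamma(t-1+R_\theta+M)/\big(\Gamma(t-1+M)\Gamma(t+R_\theta+M)\big)$ to the factor $(M+t-1)/(R_\theta+t+M-1)$. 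By the single-coordinate increment, the product over coordinates reduces to the lone $i=S_t$ term, which simplifies to $([Z_\theta]_{S_t}+[N_t]_{S_t})/[N_t]_{S_t}$. Multiplying the two factors yields exactly $\ell_\theta(S_t,N_t)$.

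The computation is otherwise routine Gamma-function algebra; the only steps that require care are verifying the base case so the telescoping product starts cleanly at $1$, and justifying the single-coordinate-increment observation, which is precisely what forces the coordinate product to telescope down to the single factor involving $S_t$ and thereby recovers the sequential, histogram-dependent likelihood $\ell_\theta(S_t,N_t)$.
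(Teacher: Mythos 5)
Your proposal is correct and follows essentially the same route as the paper's proof: a telescoping decomposition of $\Lambda_\theta(N_k|Z_\theta)$ into single-step ratios, combined with the single-coordinate-increment observation $[N_t]_{S_t} = [N_{t-1}]_{S_t}+1$ (with all other coordinates unchanged) to collapse each ratio to $\ell_\theta(S_t,N_t)$; the paper carries out the cancellation using ratios of Beta functions where you use the Gamma-function form directly, but these are the same computation. Your explicit verification of the base case $\Lambda_\theta(N_0|Z_\theta)=1$ is a small improvement in rigor over the paper's unstated ``trivially it holds.''
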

	\begin{proof}
		Initially, note that trivially it holds that
		\begin{align*}
		\Lambda_{\theta} (N_{k}|Z_\theta ) & = \prod_{t=1}^{k} \frac{\Lambda_{\theta} (N_{t}|Z_\theta )}{\Lambda_{\theta} (N_{t{-}1}|Z_\theta )}.
		\end{align*}
		Moreover
		\begin{align*}
		\frac{\Lambda_{\theta} (N_{t}|Z_\theta )}{\Lambda_{\theta} (N_{t{-}1}|Z_\theta )} & = \frac{\text{Beta}(Z_\theta {+} N_{t} {+} \boldsymbol{1})\text{Beta}( N_{t{-}1} {+} \boldsymbol{1})}{\text{Beta}( N_{t} {+} \boldsymbol{1}) \text{Beta}(Z_\theta {+} N_{t{-}1} {+} \boldsymbol{1})}.
		\end{align*}
		
		Also, note that
		\begin{align*}
		\frac{\text{Beta}(Z_\theta {+} N_t {+}\boldsymbol{1})}{\text{Beta}(Z_\theta {+} N_{t{-}1} {+}\boldsymbol{1})} & = \frac{\frac{\prod \Gamma([Z_\theta]_{S_{t}} {+} [N_t]_{S_{t}} {+}1)}{\Gamma(R_\theta{+}t{+}M)}}{\frac{\prod \Gamma([Z_\theta]_{S_{t}} {+} [N_{t{-}1}]_{S_{t}} {+}1)}{\Gamma(R_\theta {+} t{-}1{+}M)}} \\ & = \frac{[Z_\theta]_{S_t}{+} [N_{t}]_{S_{t}}}{(R_\theta {+} t{+}M{-}1)},
		\end{align*}
		and
		\begin{align*}
		\frac{\text{Beta}(N_t {+}\boldsymbol{1})}{\text{Beta}(N_{t{-}1} {+}\boldsymbol{1})}   & = \frac{\frac{\prod \Gamma(N_t^i {+}1)}{\Gamma(t{+}M)}}{\frac{\prod \Gamma(N_{t{-}1}^i {+}1)}{\Gamma(t{-}1{+}M)}} = \frac{[N_{t}]_{S_t}}{( t{+}M{-}1)}.
		\end{align*}
		Note that we have used the fact that $[N_t]_l = [N_{t-1}]_l$ for $l \neq S_t$,  otherwise, $[N_t]_{S_t} = [N_{t-1}]_{S_t}+1$.
	\end{proof}
	
 	Moreover, the following result follows from Lemma~\ref{lemm:iter}.
	\begin{corollary}\label{corr:limit1}
		The random variable $\ell^i_{\theta}(S_{t},N_{t})$ has the following property:	
		$
		\lim_{k \to \infty}\ell^i_{\theta}(S_{k},N_{k}) = 1 $ almost surely.
	\end{corollary}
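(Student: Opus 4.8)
The plan is to rewrite the single-step factor supplied by Lemma~\ref{lemm:iter} as a product of a purely deterministic ratio and a random ratio, and then take the limit in each factor separately. Writing
\begin{align*}
\ell_{\theta}(S_{k},N_{k}) = \left(1 + \frac{[Z_\theta]_{S_k}}{[N_k]_{S_k}}\right)\cdot \frac{M+k-1}{R_\theta+k+M-1}
\end{align*}
isolates the dependence on the random signal history inside the first parenthesis, while the second factor depends only on the fixed constants $M$, $R_\theta$, and the time index $k$. The claim then reduces to showing that each factor tends to $1$ almost surely.

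The deterministic factor is the trivial part: since $M$ and $R_\theta$ are fixed finite quantities, both numerator and denominator equal $k + O(1)$, so $(M+k-1)/(R_\theta+k+M-1) \to 1$ as $k \to \infty$ for every sample path.

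For the random factor I would invoke the strong law of large numbers. The numerator $[Z_\theta]_{S_k}$ is bounded uniformly in $k$, since it never exceeds $R_\theta = \sum_l [Z_\theta]_l$ no matter which symbol $S_k$ happens to be; hence it suffices to show that the denominator $[N_k]_{S_k}$ diverges almost surely. I would argue that $S_k$ can only take values among symbols $s_l$ with $[p_{\theta^*}]_l > 0$ (a symbol of zero probability is never observed), and that for each such $l$ the coordinate count satisfies $[N_k]_l / k \to [p_{\theta^*}]_l > 0$ a.s. by the strong law of large numbers, whence $\min_{l:\,[p_{\theta^*}]_l>0}[N_k]_l \to \infty$ a.s. Since $[N_k]_{S_k}$ is bounded below by this minimum, it diverges almost surely, forcing $[Z_\theta]_{S_k}/[N_k]_{S_k}\to 0$ and the random factor to $1$.

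Combining the two limits yields $\ell_\theta(S_k,N_k)\to 1\cdot 1 = 1$ almost surely. The only point requiring care is that the index $S_k$ varies with $k$, so one cannot simply fix a single coordinate and apply the strong law directly; the remedy is the uniform upper bound on the numerator together with the uniform lower bound on the denominator via the minimum over positively-weighted coordinates, and this is the step I would flag as the main (though mild) obstacle. As a consistency check, this limit is exactly what one expects from~\eqref{eq:main_aux}: the product $\prod_{t=1}^{k}\ell_\theta(S_t,N_t)=\Lambda_\theta(N_k|Z_\theta)$ converges to a finite positive limit, which forces the incremental factors to approach $1$.
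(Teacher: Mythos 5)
Your proof is correct and follows essentially the same route as the paper's, which simply invokes the strong law of large numbers together with the definition of $\ell^i_{\theta}(S_{t},N_{t})$. Your write-up fills in the details that the paper leaves implicit: the factorization into a deterministic ratio tending to $1$ and the random term $[Z_\theta]_{S_k}/[N_k]_{S_k}$, with the varying index $S_k$ handled via the uniform bound $[Z_\theta]_{S_k}\le R_\theta$ and the divergence of $\min_{l:\,[p_{\theta^*}]_l>0}[N_k]_l$.
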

	\begin{proof}
		The desired result follows from the strong law of large numbers and the definition of $\ell^i_{\theta}(S_{t},N_{t})$.
	\end{proof}
	
	With this results at hand, in the next section we prove our main result in Theorem~\ref{thm:main}.
	We show that a belief update rule based on the derived uncertain likelihood ratio  will converge to a value that is proportional to the probability of observing the prior empirical evidence signal $Z_\theta$ under the unknown probability law of the state of the world $\theta^*$.
	
	\section{Convergence of Non{-}Bayesian Social Learning with Uncertain Models}\label{sec:convergence}
	
	In this section, we provide the proof for our main result in  Theorem~\ref{thm:main}, which states that the proposed algorithm generates a sequence of beliefs, such that the belief fn a hypothesis converges asymptotically to the average log-likelihood ratio of the true distribution of the observations, given the empirical evidence for that specific hypothesis.
	
	Initially, we recall a number of auxiliary lemmas that will help us build the proof of Theorem~\ref{thm:main}.
	
	\begin{lemma}[Corollary 2.a in \cite{ned13}]\label{lemma_angelia}
		Let the graph sequence $\{\mathcal{G}_k\}$, with $\mathcal{G}_k = \left(E_k,V\right)$ be uniformly strongly connected, and define the matrix $A_k$ as
		\begin{align*}
		\left[A_k\right]_{ij} & = \begin{cases}
		\frac{1}{d_k^j{+}1} & \text{if } (j,i) \in E_k,\\
		0 & \text{otherwise}.
		\end{cases}
		\end{align*}
		Then, there is a sequence $\{\boldsymbol{\phi}_k\}$ of stochastic vectors such that, $
		|\left[A_{k:t}\right]_{ij} {-} \phi_k^i|  \leq C \lambda^{k{-}t}  \ \ \  \text{for all } \ k \geq t \geq 0,
		$
		for $C$ is a positive constant and $\lambda \in \left(0,1\right)$.
	\end{lemma}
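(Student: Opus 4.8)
The plan is to read $A_{k:t}$ as a backward product of column{-}stochastic matrices and to invoke the classical weak{-}ergodicity machinery for inhomogeneous products, transposed to fit the column{-}stochastic setting. First I would record the two elementary structural facts about the $A_k$ that carry all the quantitative content. Each $A_k$ is column{-}stochastic: the factor $d_k^j{+}1$ reflects that node $j$ distributes its unit of mass uniformly among its $d_k^j$ out{-}neighbors and itself, so column $j$ has exactly $d_k^j{+}1$ nonzero entries equal to $1/(d_k^j{+}1)$ and hence sums to one; consequently every finite product $A_{k:t}$ is column{-}stochastic as well. Moreover, since $d_k^j{+}1 \leq m$, every nonzero entry of $A_k$ is bounded below by $1/m$. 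Column{-}stochasticity and this uniform positive floor on the support are the only inputs the argument needs.

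Next I would exploit Assumption~\ref{assum:connected}. Because the union of the edge sets over each window of length $B$ is connected and each node retains its self{-}loop, a standard reachability argument produces a finite horizon $T$ (of order $mB$) such that for every $t$ and every $k \geq t{+}T$ the product $A_{k:t}$ has all entries strictly positive and bounded below by $\eta := (1/m)^{T}$. The mechanism is that a path of length at most $T$ from any source to any sink in the accumulated graph forces the corresponding entry of the product to dominate the product of the positive weights along that path, each of which is at least $1/m$, while self{-}loops guarantee that a positive entry, once created, is never destroyed.

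The core step is a geometric contraction of the column spread. Passing to transposes, $A_{k:t}^{\top} = A_t^{\top}A_{t{+}1}^{\top}\cdots A_k^{\top}$ is a forward product of row{-}stochastic matrices, so I would use Dobrushin's coefficient of ergodicity $\tau(\cdot)$, which is submultiplicative under products and satisfies $\tau(W)\leq 1{-}m\eta$ for any stochastic $W$ whose entries are all at least $\eta$. Chaining this bound across the $\lfloor (k{-}t)/T\rfloor$ consecutive windows of length $T$ yields $\tau(A_{k:t}^{\top})\leq (1{-}m\eta)^{\lfloor (k{-}t)/T\rfloor}$, so the columns of $A_{k:t}$ become asymptotically identical at a geometric rate. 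Fixing $k$, the columns then form a Cauchy family; I would take $\phi_k^i$ to be the $i$th coordinate of their common limit, equivalently the absolute probability sequence characterized by $\phi_{k{+}1}=A_{k{+}1}\phi_k$ (whose existence is guaranteed for any stochastic sequence). Since each column lies in the probability simplex, which is closed and invariant under column{-}stochastic maps, $\phi_k$ is a stochastic vector, and the contraction estimate gives $|[A_{k:t}]_{ij}-\phi_k^i|\leq C\lambda^{k{-}t}$ with $\lambda=(1{-}m\eta)^{1/T}$ and $C$ absorbing the window rounding.

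I expect the main obstacle to be the second step, namely making the reachability{-}to{-}positivity translation uniform in $t$: one must argue that the horizon $T$ and the floor $\eta$ can be chosen independently of where the window begins. This is exactly where the $B${-}strong{-}connectivity of every block must be combined with the persistence of self{-}loops, so that accumulated reachability over any $T$ consecutive steps is guaranteed regardless of the starting index. Once this uniform positivity is established, both the contraction and the construction of $\phi_k$ follow as routine consequences of the Dobrushin{-}coefficient calculus.
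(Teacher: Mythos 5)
The paper offers no proof of this lemma to compare against: it is imported verbatim as Corollary 2(a) of \cite{ned13}, and the authors simply cite it. Your blind reconstruction is, however, essentially the standard argument behind that cited result, and it is sound: column-stochasticity of $A_k$ (which, as you correctly observe, hinges on every node having a self-loop so that column $j$ has exactly $d_k^j{+}1$ nonzero entries equal to $1/(d_k^j{+}1)$), the uniform floor $1/m$ on nonzero entries, uniform positivity of products over windows of length of order $mB$, and geometric contraction of the column spread via Dobrushin's coefficient applied to the transposed (row-stochastic) products. One step deserves tightening: for fixed $k$ the index $t$ ranges only over the finite set $\{0,\dots,k\}$, so the columns of $A_{k:t}$ do not literally ``form a Cauchy family'' with a common limit; what makes the construction rigorous is precisely your second characterization, the forward recursion $\phi_{k{+}1}=A_{k{+}1}\phi_k$ started from an arbitrary stochastic $\phi_0$ (existence is immediate by iteration here -- no Kolmogorov-type compactness argument is needed, since the chain is column-stochastic and runs forward). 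With that definition, $\phi_k = A_{k:1}\phi_0$ is a convex combination of the columns of $A_{k:1}$, hence for any $t\leq k$ each column of $A_{k:t}-\phi_k\mathbf{1}^{\top}$ is the difference between a column of $A_{k:t}$ and a convex combination of columns of $A_{k:t}$, and is therefore bounded entrywise by the maximal pairwise column difference, which your Dobrushin estimate bounds by $C\lambda^{k-t}$. Stated this way, your proof is complete and matches the machinery used in the source the paper cites.
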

	\begin{lemma}[Corollary 2.b in \cite{ned13}] \label{lemma_deltabound}
		Let the graph sequence $\left\{\mathcal{G}_k\right\}$ satisfy Assumption~\ref{assum:connected}. Define
		\begin{align}\label{eq:defdelta}
		\delta \triangleq\inf_{k\geq 0} \left(\min_{1\leq i\leq n}\left[A_{k:0} \mathbf{1}_m \right]_i\right).
		\end{align}
		Then, $\delta \geq 1/m^{mB}$, and if all $\mathcal{G}_k$ with $B=1$ are regular, then $\delta=1$.
		Furthermore, the sequence
		$\boldsymbol{\phi_k}$ from Lemma \ref{lemma_angelia} satisfies $\phi_k^j \geq \delta/m$ for all $k \geq 0, j = 1, \ldots, m$.
	\end{lemma}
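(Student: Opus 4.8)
The plan is to reconstruct the bound directly from the combinatorial structure of the matrices $A_k$, exploiting three features. Under the standard push{-}sum convention that every node carries a self{-}loop, each $A_k$ is column{-}stochastic, since column $j$ sums to $(d^j_k{+}1)/(d^j_k{+}1)=1$; every nonzero entry is at least $1/m$, because $d^j_k \le m{-}1$ gives $1/(d^j_k{+}1)\ge 1/m$; and the diagonal entries are always at least $1/m$. Column{-}stochasticity is inherited by every product $A_{k:0}$, so $\mathbf{1}^\top A_{k:0}\mathbf{1}_m = m$ and the row{-}sum vector $A_{k:0}\mathbf{1}_m$ has nonnegative entries summing to $m$. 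These observations turn the problem into a uniform lower bound on the smallest row sum of a column{-}stochastic product.

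First I would establish $\delta \ge 1/m^{mB}$ by lower{-}bounding $\min_i[A_{k:0}\mathbf{1}_m]_i$ uniformly in $k$, splitting into two regimes. For $k < mB$, the self{-}loop chain gives $[A_{k:0}]_{ii} \ge \prod_{t=0}^{k}[A_t]_{ii} \ge (1/m)^{k+1} \ge (1/m)^{mB}$, and since all entries are nonnegative the row sum dominates this diagonal term. For $k \ge mB$ I would factor $A_{k:0} = A_{k:k-mB+1}\,A_{k-mB:0}$ and prove a flooding estimate: under Assumption~\ref{assum:connected} the union graph over any $B${-}block is connected, hence has diameter at most $m{-}1$, so any node reaches any other in at most $(m{-}1)B \le mB$ steps; padding the remaining steps with self{-}loops produces a length{-}$mB$ walk whose $mB$ factors each contribute at least $1/m$, giving $[A_{k:k-mB+1}]_{ij} \ge (1/m)^{mB}$ for all $i,j$. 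Then $[A_{k:0}\mathbf{1}_m]_i = \sum_j [A_{k:k-mB+1}]_{ij}\,[A_{k-mB:0}\mathbf{1}_m]_j \ge (1/m)^{mB}\,m \ge (1/m)^{mB}$, using that the tail is column{-}stochastic so its row sums total $m$. Taking the infimum over $k$ and the minimum over $i$ yields $\delta \ge 1/m^{mB}$. The regular case is immediate: with $B=1$ and every node having in{-} and out{-}degree equal to a common $d$, each $A_k$ is doubly stochastic, so $A_{k:0}\mathbf{1}_m = \mathbf{1}_m$ and $\delta = 1$.

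For the final claim $\phi_k^j \ge \delta/m$, I would combine the uniform row{-}sum bound with the geometric ergodicity of Lemma~\ref{lemma_angelia}. Summing the entrywise estimate $|[A_{k:0}]_{ij} - \phi_k^i| \le C\lambda^{k}$ over the $m$ columns shows that the per{-}node row sum $[A_{k:0}\mathbf{1}_m]_i$ converges to $m\phi_k^i$; since $[A_{k:0}\mathbf{1}_m]_i \ge \delta$ holds for every $k$ by the definition of $\delta$, this forces $m\phi_k^i \ge \delta$, i.e.\ $\phi_k^j \ge \delta/m$. Making this exact for every index $k$, rather than merely asymptotically, is where I would lean on the absolute{-}probability{-}sequence construction of \cite{ned13}: the vectors $\phi_k$ are built consistently across time through $\phi_k = A_k\phi_{k-1}$, which together with the uniform row{-}sum bound propagates the estimate to all $k$.

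The main obstacle I expect is the flooding estimate, namely showing that every entry of a length{-}$mB$ product is at least $(1/m)^{mB}$, since this is where $B${-}strong{-}connectivity, the diameter bound, and the self{-}loops must be combined carefully to recover exactly the exponent $mB$. By comparison, the passage from the row{-}sum bound to the bound on $\phi_k$ is a routine consequence of Lemma~\ref{lemma_angelia}, modulo the bookkeeping needed to anchor the absolute probability sequence so that $\phi_k^j \ge \delta/m$ is valid for all $k \ge 0$.
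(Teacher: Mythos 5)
The paper never proves this lemma: it is imported verbatim as Corollary~2(b) of \cite{ned13} and used as a black box, so your reconstruction can only be judged against the argument in that reference. Your outline does follow the same route (column-stochasticity of the products under the self-loop convention, an entrywise lower bound over windows of $mB$ steps, the split between $k<mB$ and $k\geq mB$, and identification of $\boldsymbol{\phi}_k$ with an appropriately anchored absolute probability sequence). However, two of your steps need repair before this counts as a proof.

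The first is the flooding estimate. Your justification --- the union graph of a single $B$-block is connected, hence has diameter at most $m{-}1$, hence any node reaches any other within $(m{-}1)B$ steps --- is invalid for time-varying graphs: a shortest path taken from the union graph of one block need not exist in the subsequent blocks, so it cannot be traversed one edge per block while waiting on self-loops. The correct mechanism is reachable-set growth: if $S$ is the set of nodes reachable from $j$ by temporal walks up to the start of a block and $S \neq V$, strong connectivity of that block's union graph provides an edge from $S$ to $V\setminus S$ at some time inside the block, so waiting on self-loops until that time enlarges $S$ by at least one node per block. After at most $m{-}1$ complete blocks every node is reached, and since any window of $mB$ consecutive steps contains at least $m{-}1$ complete aligned blocks, padding with self-loops gives $[A_{k:k-mB+1}]_{ij} \geq (1/m)^{mB}$. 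With that substitution, your two-regime computation of $\delta \geq 1/m^{mB}$ and your doubly-stochastic argument for the regular case are correct.

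The second gap is the final claim $\phi_k^j \geq \delta/m$, and it is not ``routine bookkeeping'': the exact inequality is \emph{false} for some sequences satisfying Lemma~\ref{lemma_angelia}, because any sequence within $O(\lambda^k)$ of a valid one is again valid. For instance $\tilde{\boldsymbol{\phi}}_k := A_{k:0}\,e_1$ (the first column of $A_{k:0}$) satisfies the geometric estimate, yet $\tilde\phi_k^i = [A_{k:0}]_{i1} = 0$ whenever no temporal path from node $1$ to node $i$ exists by time $k$, which violates $\tilde\phi_k^i \geq \delta/m > 0$. The exact bound is a property of one particular construction: take $\boldsymbol{\phi}_k := \frac{1}{m}A_{k:0}\mathbf{1}_m$, equivalently the recursion $\boldsymbol{\phi}_{k+1} = A_{k+1}\boldsymbol{\phi}_k$ anchored at the \emph{uniform} vector. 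For this choice the inequality $\phi_k^j \geq \delta/m$ is immediate from \eqref{eq:defdelta}; what must be verified instead is that this choice satisfies the estimate of Lemma~\ref{lemma_angelia}, which holds because
\begin{align*}
\frac{1}{m}\left[A_{k:0}\mathbf{1}_m\right]_i = \sum_{r=1}^{m} \left[A_{k:t}\right]_{ir} w_r, \qquad w_r := \frac{1}{m}\left[A_{t-1:0}\mathbf{1}_m\right]_r,
\end{align*}
where the $w_r$ are nonnegative and sum to one by column-stochasticity, so $\phi_k^i$ is a convex combination of the row-$i$ entries of $A_{k:t}$, each of which lies within $O(\lambda^{k-t})$ of $[A_{k:t}]_{ij}$. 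In short, the burden of proof sits exactly at the step you labeled routine, and the anchoring at $\mathbf{1}_m/m$ is essential rather than a free choice.
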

	
	\begin{lemma}[Lemma $3.1$ in \cite{ram10}]\label{lemm:ram}
		Let $\{\gamma_k \}$ be a scalar sequence. If  $\lim_{k \to \infty} \gamma_k = \gamma$ and $0\leq \beta \leq 1$, then $\lim_{k\to \infty} \sum_{l=0}^{k} \beta^{k{-}l} \gamma_l = {\gamma}/({1{-}\beta})$.
	\end{lemma}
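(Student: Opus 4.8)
The plan is to prove this as a classical weighted-average (Toeplitz-type) convergence result by an elementary $\varepsilon$--$N$ argument, viewing the sum as a combination of the tail values of $\gamma_l$, which are eventually all close to $\gamma$. Throughout I assume $0 \le \beta < 1$ so that the limit $\gamma/(1-\beta)$ is well defined; the boundary case $\beta = 1$ must be excluded, since the right-hand side then diverges.

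First I would record the geometric-sum identity $\sum_{l=0}^k \beta^{k-l} = \sum_{m=0}^k \beta^m = (1-\beta^{k+1})/(1-\beta)$, which tends to $1/(1-\beta)$. This lets me rewrite the quantity of interest as
\begin{align*}
\sum_{l=0}^k \beta^{k-l}\gamma_l - \frac{\gamma}{1-\beta} = \sum_{l=0}^k \beta^{k-l}(\gamma_l - \gamma) - \frac{\beta^{k+1}}{1-\beta}\,\gamma,
\end{align*}
so the problem reduces to showing that the weighted sum of the deviations $\gamma_l-\gamma$ vanishes, the second term tending to $0$ outright because $\beta^{k+1}\to 0$.

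Next, fixing $\varepsilon>0$ and using $\gamma_l\to\gamma$, I would choose $N$ so that $|\gamma_l-\gamma|<\varepsilon$ for all $l\ge N$, and then split the deviation sum at $N$ for $k>N$. The tail block $\sum_{l=N}^k \beta^{k-l}|\gamma_l-\gamma|$ is bounded by $\varepsilon\sum_{m=0}^\infty \beta^m = \varepsilon/(1-\beta)$, uniformly in $k$. The head block $\sum_{l=0}^{N-1}\beta^{k-l}|\gamma_l-\gamma|$ contains only $N$ terms, each carrying a weight $\beta^{k-l}\le \beta^{k-N+1}$; since a convergent sequence is bounded, this block is at most $(\text{const})\cdot\beta^{k-N+1}$, which tends to $0$ as $k\to\infty$ because $\beta<1$. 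Taking $\limsup_{k\to\infty}$ therefore yields $\limsup_k \big|\sum_{l=0}^k \beta^{k-l}\gamma_l - \gamma/(1-\beta)\big|\le \varepsilon/(1-\beta)$, and letting $\varepsilon\downarrow 0$ gives the claim.

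A cleaner alternative I could present instead exploits the recursion $T_k := \sum_{l=0}^k \beta^{k-l}\gamma_l = \beta T_{k-1} + \gamma_k$, whose fixed point satisfies $T^\star = \beta T^\star + \gamma$, i.e.\ $T^\star = \gamma/(1-\beta)$; the error $e_k := T_k - T^\star$ then obeys $e_k = \beta e_{k-1} + (\gamma_k - \gamma)$, a stable scalar recursion driven by a vanishing input, so $e_k\to 0$. There is no genuine obstacle here, as the statement is a standard fact; the only care needed is the usual bookkeeping of the two-regime split, ensuring that the finitely many early terms (large weight but stale values) are killed by the geometric decay $\beta^{k}$, while the infinitely many late terms are controlled through their proximity to $\gamma$.
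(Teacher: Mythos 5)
Your proof is correct. Note, however, that the paper itself gives no proof of this statement: it is imported verbatim as Lemma 3.1 of the cited reference (Ram et al.) and used as a black box, so there is no internal argument to compare against. Your $\varepsilon$--$N$ argument is the standard, self-contained one: the decomposition $\sum_{l=0}^k \beta^{k-l}\gamma_l - \gamma/(1-\beta) = \sum_{l=0}^k \beta^{k-l}(\gamma_l-\gamma) - \gamma\beta^{k+1}/(1-\beta)$ is algebraically exact, the tail block is uniformly bounded by $\varepsilon/(1-\beta)$, and the head block of $N$ stale terms dies geometrically; taking $\limsup$ and letting $\varepsilon \downarrow 0$ closes the argument. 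You also correctly flag a genuine typo in the paper's statement: the hypothesis must be $0 \le \beta < 1$, since for $\beta = 1$ the right-hand side is undefined and the partial sums generically diverge (e.g.\ $\gamma_l = 1$ for all $l$); indeed the paper only ever applies the lemma with $\lambda \in (0,1)$. One caution on your ``cleaner alternative'': the assertion that the stable recursion $e_k = \beta e_{k-1} + (\gamma_k - \gamma)$ with vanishing input forces $e_k \to 0$ is, after unrolling, exactly the lemma being proved (with $\gamma = 0$), so as stated it is circular and would need the same two-regime split to justify; it is fine as intuition but your first argument is the one that constitutes a proof.
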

	
	Now, we are ready to state the proof of our main result in Theorem~\ref{thm:main}.
	
	\begin{proof}[Theorem~\ref{thm:main}]
		It follows from~(2b) that
		{\small
		\begin{align*}
		& y_{k{+}1}^i \log\big(\mu_{k{+}1}^i(\theta)\big)  = \sum_{j \in N^i_k} \frac{y_k^j\log \mu^j_k(\theta)}{d_k^j{+}1}  {+} \log \ell_\theta^i(S^i_{k{+}1},N^i_{k{+}1}) \\
		& \qquad \qquad = \sum_{j=1}^{m} [A_k]_{ij} y_k^j\log \mu^j_k(\theta) {+} \log \ell_\theta^i(S^i_{k{+}1},N^i_{k{+}1}).
		\end{align*}
		}

		By defining the new vector variables $[\boldsymbol{\varphi}_{k}(\theta)]_i = y_{k}^i \log\left(\mu_{k}^i(\theta)\right)$ and $[\boldsymbol{\mathcal{L}}_k(\theta)]_i = \log \ell_\theta^i(S^i_{k},N^i_{k})$, it holds that
		{\small
		\begin{align}\label{eq:vec_phi}
		\boldsymbol{\varphi}_{k{+}1}(\theta) & = A_k \boldsymbol{\varphi}_k(\theta) {+} \boldsymbol{\mathcal{L}}_{k{+}1}(\theta) \nonumber\\
		&  = A_{k:0}\boldsymbol{\varphi}_{0}(\theta)  {+} \sum_{t=1}^{k}A_{k:t}\boldsymbol{\mathcal{L}}_{t}(\theta) {+} \boldsymbol{\mathcal{L}}_{k{+}1}(\theta).
		\end{align}
		}
		
		Add and subtract $\Sigma_{t=1}^{k} \phi_k \boldsymbol{1}' \boldsymbol{\mathcal{L}}_{t}(\theta)$ from~\eqref{eq:vec_phi}, then
		{\small
		\begin{align*}
		\boldsymbol{\varphi}_{k{+}1}(\theta) &  = A_{k:0}\boldsymbol{\varphi}_{0}(\theta)  {+} \sum_{t=1}^{k}A_{k:t}\boldsymbol{\mathcal{L}}_{t}(\theta) {+} \boldsymbol{\mathcal{L}}_{k{+}1}(\theta) 
		\\ & \quad {-} \sum_{t=1}^{k} \phi_k \boldsymbol{1}' \boldsymbol{\mathcal{L}}_{t}(\theta) {+}  \sum_{t=1}^{k} \phi_k \boldsymbol{1}' \boldsymbol{\mathcal{L}}_{t}(\theta) \\
		& = \sum_{t=1}^{k}D_{k:t}\boldsymbol{\mathcal{L}}_{t}(\theta) {+} \boldsymbol{\mathcal{L}}_{k{+}1}(\theta) 
		{+}  \sum_{t=1}^{k} \phi_k \boldsymbol{1}' \boldsymbol{\mathcal{L}}_{t}(\theta),
		\end{align*}
		}
		\noindent where we have assumed that without loss of generality that $\mu_0^i(\theta)=1$ for all $i\in V$, and $D_{k:t} = A_{k:t}{-}\phi_k \boldsymbol{1}'$.
		
		Similarly, note that $y^i_{k{+}1} = [D_{k:0} \boldsymbol{1}]_i {+} \phi_k^i m$.
		
		Therefore, we have that
		{\small
			\begin{align*}
			\log \mu^i_{k{+}1}(\theta) & = \frac{\sum\limits_{t=1}^{k}[D_{k:t}\boldsymbol{\mathcal{L}}_{t}(\theta)]_i {+} [\boldsymbol{\mathcal{L}}_{k{+}1}(\theta)]_i
				{+}  \sum\limits_{\tau=1}^{t{-}1} \phi_k^i \boldsymbol{1}' \boldsymbol{\mathcal{L}}_{t}(\theta)}{[D_{k:0} \boldsymbol{1}]_i {+} \phi_k^i m},
			\end{align*}
		}
		
		\noindent and by adding and subtracting we obtain
		\allowdisplaybreaks
		
		\vspace{-0.4cm}
		{\small
			\begin{align*}
			& \log \mu^i_{k{+}1}(\theta)  = \frac{\sum\limits_{t=1}^{k}[D_{k:t}\boldsymbol{\mathcal{L}}_{t}(\theta)]_i {+} [\boldsymbol{\mathcal{L}}_{k{+}1}(\theta)]_i
				{+}  \sum\limits_{t=1}^{k} \phi_k^i \boldsymbol{1}' \boldsymbol{\mathcal{L}}_{t}(\theta)}{[D_{k:0} \boldsymbol{1}]_i {+} \phi_k^i m} \\
			& \qquad
			{-}\frac{1}{m}  \sum_{t=1}^k \boldsymbol{1}' \boldsymbol{\mathcal{L}}_{t}(\theta)  {+}\frac{1}{m}   \sum_{t=1}^k \boldsymbol{1}' \boldsymbol{\mathcal{L}}_{t}(\theta) \\
			& = \frac{m \left( \sum\limits_{t=1}^{k}[D_{k:t}\boldsymbol{\mathcal{L}}_{t}(\theta)]_i {+} [\boldsymbol{\mathcal{L}}_{k{+}1}(\theta)]_i
				{+}  \sum\limits_{t=1}^{k} \phi_k^i \boldsymbol{1}' \boldsymbol{\mathcal{L}}_{t}(\theta)\right) }{ m \left( [D_{k:0} \boldsymbol{1}]_i {+} \phi_k^i m \right) }
			\\
			&\qquad {-} \frac{  \left( [D_{k:0} \boldsymbol{1}]_i {+} \phi_k^i m \right) \left( \sum\limits_{t=1}^k \boldsymbol{1}' \boldsymbol{\mathcal{L}}_{t}(\theta) \right)  }{ m \left( [D_{k:0} \boldsymbol{1}]_i {+} \phi_k^i m \right) }
			{+}\frac{1}{m}   \sum\limits_{t=1}^k \boldsymbol{1}' \boldsymbol{\mathcal{L}}_{t}(\theta) \\
			& = \frac{ \sum\limits_{t=1}^{k}[D_{k:t}\boldsymbol{\mathcal{L}}_{t}(\theta)]_i {+} [\boldsymbol{\mathcal{L}}_{k{+}1}(\theta)]_i
			}{  [D_{k:0} \boldsymbol{1}]_i {+} \phi_k^i m  }  {-} \frac{  [D_{k:0} \boldsymbol{1}]_i   \left( \sum\limits_{t=1}^k \boldsymbol{1}' \boldsymbol{\mathcal{L}}_{t}(\theta) \right)  }{ m \left( [D_{k:0} \boldsymbol{1}]_i {+} \phi_k^i m \right) } \\
			& \qquad
			{+}\frac{1}{m}   \sum\limits_{t=1}^k \boldsymbol{1}' \boldsymbol{\mathcal{L}}_{t}(\theta).
			\end{align*}
		}
		
		Thus,
		{\small
			\begin{align*}
			& \bigg | \log \mu^i_{k{+}1}(\theta) {-} \frac{1}{m}   \sum_{t=1}^k \boldsymbol{1}' \boldsymbol{\mathcal{L}}_{t}(\theta) \bigg | \\ 
			& = \bigg |\frac{ \sum\limits_{t=1}^{k}[D_{k:t}\boldsymbol{\mathcal{L}}_{t}(\theta)]_i {+} [\boldsymbol{\mathcal{L}}_{k{+}1}(\theta)]_i
			}{  [D_{k:0} \boldsymbol{1}]_i {+} \phi_k^i m } {-}
			\frac{  [D_{k:0} \boldsymbol{1}]_i   \left( \sum\limits_{t=1}^k \boldsymbol{1}' \boldsymbol{\mathcal{L}}_{t}(\theta) \right)  }{ m \left( [D_{k:0} \boldsymbol{1}]_i {+} \phi_k^i m \right) }  \bigg |\\
			& \leq  \frac{1}{\delta}  \sum_{t=1}^{k}\bigg | [D_{k:t}\boldsymbol{\mathcal{L}}_{t}(\theta)]_i\bigg |  {+} \frac{1}{\delta}\bigg | [\boldsymbol{\mathcal{L}}_{k{+}1}(\theta)]_i \bigg | \\
			& \qquad {+} \frac{1}{m\delta}\bigg |   [D_{k:0} \boldsymbol{1}]_i   \left( \sum_{t=1}^k \boldsymbol{1}' \boldsymbol{\mathcal{L}}_{t}(\theta) \right) \bigg | \\
			& \leq  \frac{1}{\delta}  \sum_{t=1}^{k} \lambda^{k{-}t} \|\boldsymbol{\mathcal{L}}_{t}(\theta)\|_1  {+} \frac{1}{\delta}\bigg | [\boldsymbol{\mathcal{L}}_{k{+}1}(\theta)]_i \bigg | {+} \frac{1}{\delta} \lambda^{k}   \sum_{t=1}^{k}   \|\boldsymbol{\mathcal{L}}_{t}(\theta)\|_1
			\end{align*}
		}
	where we have used Lemma~\ref{lemma_angelia} to bound $D_{k:t}$ and obtain ${\left[ D_{k:0}\mathbf{1}\right]_i+ \phi_k^i m \geq \delta}$. Note that, $\delta \geq 1/m^{mB}$ from Lemma~\ref{lemma_deltabound}. 
		
		Also, note that $[\boldsymbol{\mathcal{L}}_{k{+}1}(\theta)]_i $ is upper bounded, thus, it follows from Lemma \ref{lemm:ram}, that $
		\lim_{k \to \infty} \sum_{t=1}^{k} \lambda^{k{-}t} \|\boldsymbol{\mathcal{L}}_{t}(\theta)\|_1 = 0, $ almost surely, 
		and $
		\lim_{k \to \infty}  [\boldsymbol{\mathcal{L}}_{k{+}1}(\theta)]_i  = 0, $ almost surely.
		
	Furthermore, note that $[\boldsymbol{\mathcal{L}}_{k{+}1}(\theta)]_i $ is upper bounded, thus,
		\begin{align*}
		\lim_{k\to \infty} \lambda^{k}   \sum_{t=1}^{k}   \|\boldsymbol{\mathcal{L}}_{t}(\theta)\|_1 = 0  \qquad \text{a.s.}
		\end{align*}
		
		Finally, it follows from the Lemma~\ref{lemm:iter} that
		\begin{align*}
		\lim_{k \to \infty}\frac{1}{m}   \sum_{t=1}^k \boldsymbol{1}' \boldsymbol{\mathcal{L}}_{t}(\theta) & =\lim_{k \to \infty} \log\left( \prod_{j=1}^{m}\Lambda_{\theta} (N_{k}^j|Z_\theta^j ) ^{1/m}\right)
		\end{align*}
		
		The desired result follows continuity of the logarithm function and~\eqref{eq:main_aux}.
	\end{proof}

	\section{Numerical Analysis}\label{sec:simulations}
	
	In this section, we validate the convergence properties of our proposed algorithm. Assume the agents are connected over the graph shown in Figure~\ref{fig:graph}~\cite{ned16}, which has been shown to be a pathological case of a graph that satisfies Assumption~\ref{assum:connected}. The agents' receive a private signal at each time step drawn from $K=2$ categories and their goal is to infer the $\theta \in \boldsymbol{\Theta}=\{\theta_1,\theta_2\}$ that best describes the ground truth $\theta^*=\theta_2$. 
	
	\begin{figure}[h]
\centering
	\includegraphics[width=0.8\columnwidth]{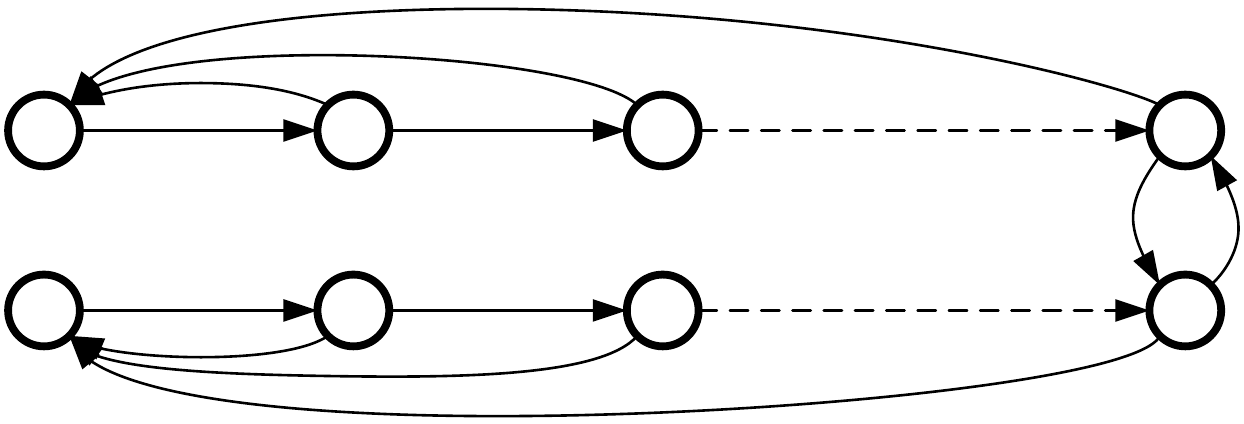}
	\caption{A directed graph with large mixing time.}
	\label{fig:graph}
\end{figure}
	
	Agents collect prior evidence $R_\theta^i$ for each hypothesis within two categories, Low, i.e. $R_\theta^i\in[0, 100]$, and High, i.e. $R_\theta^i\in[1000, 10000]$. Then, the agents observe measurements drawn from the distribution $p_{\theta^*}^i$ and update their belief as in~\eqref{algo:main}. We run $N=10$ Monte Carlo simulations and the average difference between the agents beliefs and~\eqref{eq:main_aux} is evaluated.
	
	\begin{figure}[t]
\centering
	\subfigure[$\theta_1\ne\theta^*$ convergence]{
		\includegraphics[width=0.9\columnwidth]{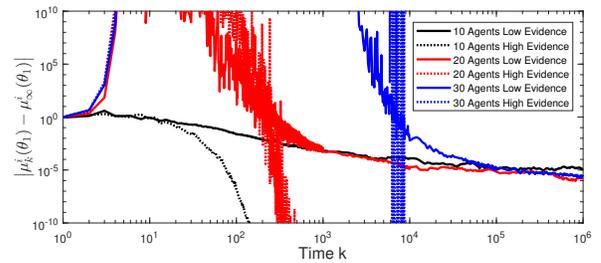}\label{fig:t1}
	}
	
    \subfigure[$\theta_2=\theta^*$ convergence]{
		\includegraphics[width=0.9\columnwidth]{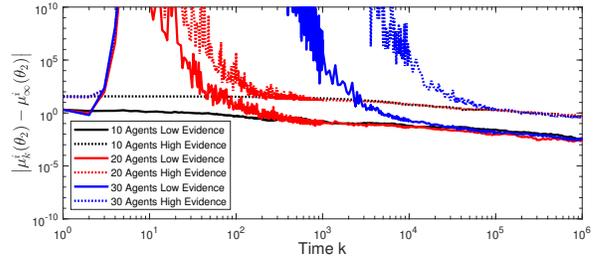}\label{fig:t2}
}
	\caption{Simulation results of Algorithm (\ref{algo:main}) for the graph in Fig. \ref{fig:graph}.}  
\end{figure}
	
	Figures \ref{fig:t1} and \ref{fig:t2} shows the result for $10$, $20$, and $30$ agents with both Low and High evidence. Figure \ref{fig:t1} shows that when the agents have a low amount of prior evidence, the same decreasing behavior is seen by all network sizes once the uncertain likelihood updates \eqref{eq:ell} converge to a value close to $1$. This also shows that the transition time for this to occur increases as the number of agents increases, which supports recent theoretical evidence that the network influence is transient in nature~\cite{olshevsky2018robust}. Furthermore, as the agents evidence increases, the rate of convergence dramatically increases and becomes exponential. Figure \ref{fig:t2} provides the same result, except this is for the ground truth hypothesis. Here, the lower the amount of evidence results in a faster rate of convergence than a high amount of prior evidence. This is because as the amount of prior evidence increases, \eqref{eq:main_aux} becomes larger and it takes longer for the beliefs to reach the convergence point. 
	
	The beliefs of a hypothesis that are consistent with the state of the world will converge to a value greater than $0$, while the beliefs of the remaining hypotheses diverge to $-\infty$. This result is seen in Table \ref{table:weak} for all network sizes with high evidence. While, when the agents have low prior evidence, the beliefs of the state of the world converge to a value close to $0$ as predicted. Thus, as the amount of evidence increases, the agents become more certain of the hypothesis that best describes the state of the world. 
	
	
\begin{table}[] 
\centering
\caption{Average point of convergence, i.e. $\frac{1}{m}\sum_{i=1}^m \log\left(\mu_T^i(\theta)\right)$}
\begin{tabular}{ccccc}
 & \multicolumn{4}{c}{Prior Evidence} \\
 \multicolumn{1}{c|}{} & \multicolumn{2}{c|}{\textbf{Low}} & \multicolumn{2}{c|}{\textbf{High}}  \\
 \multicolumn{1}{c|}{} & $\theta_1$ & \multicolumn{1}{c|}{$\theta_2$} & $\theta_1$ & \multicolumn{1}{c|}{$\theta_2$} \\ \cline{1-5} 
 \multicolumn{1}{c|}{10 Agents} & -8.28 & \multicolumn{1}{c|}{0.96} & -645 & \multicolumn{1}{c|}{3.55}  \\
 \multicolumn{1}{c|}{20 Agents} & -8.61 & \multicolumn{1}{c|}{1.03} & -659 & \multicolumn{1}{c|}{3.42} \\
\multicolumn{1}{c|}{30 Agents} & -8.05 & \multicolumn{1}{c|}{1.05} & -648 & \multicolumn{1}{c|}{3.46}  \\ 
\end{tabular}
\label{table:weak}
\vspace{-10pt}
\end{table}

	
	\section{Conclusions}\label{sec:conclusions}
	
	We proposed a new algorithm for non{-}Bayesian social learning over time{-}varying directed graphs and uncertain models. Contrary to existing literature, we analyze the effects of uncertainty in the statistical models of the hypotheses when they are built from empirical and finite evidence. We show that classical algorithms will select wrong hypotheses with non{-}zero probability. The proposed algorithm is shown to converge to the average value of a log{-}likelihood ratio between the unknown distribution of the state of the world given the empirical evidence. Moreover, doubly stochastic weights are not required, and the proposed method converges to the mean of the log{-}likelihood ratios among all the nodes in the network. Future work requires a study of convergence rates and the effects on uncertainty in the non{-}asymptotic performance of cooperative learning with uncertain models. Furthermore, it is necessary to study the explicit effects of the network topology on the convergence rates.
	
	\bibliographystyle{IEEEtran} 
	
	\bibliography{all_refs,references}
	
\end{document}